\documentclass[12pt,reqno]{amsart}

\usepackage{amsmath, amsthm, amsopn, amssymb}
\usepackage{bbm}
\usepackage{enumerate}
\usepackage{enumitem}
\usepackage{hyperref}

\setlength{\topmargin}{0in}
\setlength{\leftmargin}{0in}
\setlength{\rightmargin}{0in}
\setlength{\evensidemargin}{0in}
\setlength{\oddsidemargin}{0in}

\setlength{\textwidth}{6.6in}
\setlength{\textheight}{8.7in}


\newtheorem{thm}{Theorem}[section]

\newtheorem{prob}[thm]{Problem}
\newtheorem{prop}[thm]{Proposition}

\theoremstyle{definition}
\newtheorem{dfn}[thm]{Definition}

\renewcommand{\Re}{\mathrm{Re}}

\renewcommand{\le}{\leqslant}
\renewcommand{\ge}{\geqslant}

\newcommand{\Aes}{a_e}
\newcommand{\Aos}{a_o}
\newcommand{\Ae}{A_e}
\newcommand{\Ao}{A_o}
\newcommand{\cG}{\mathcal{G}}
\newcommand{\dG}{\vec{\mathcal{G}}}
\newcommand{\Z}{\mathbb{Z}}
\newcommand{\eps}{\varepsilon}
\newcommand{\Aut}{\mathrm{Aut}}
\newcommand{\1}{\mathbbm{1}}
\newcommand{\ST}{\mathrm{ST}}

\newcommand{\Stab}{\mathrm{Stab}}
\newcommand{\vspan}{\mathrm{span}}

\newcommand*{\Rom}[1]{{\uppercase\expandafter{\romannumeral#1\relax}}}
\newcommand{\tI}{\Rom{1}}
\newcommand{\tIp}{\tI$(p)$}
\newcommand{\tII}{\Rom{2}}
\newcommand{\tIII}{\Rom{3}}

\title{The number of additive triples in subsets of abelian groups}

\date{\today}

\author{Wojciech Samotij}
\address{School of Mathematical Sciences, Tel Aviv University, Tel Aviv 6997801, Israel}
\email{samotij@post.tau.ac.il}

\author{Benny Sudakov}
\address{Department of Mathematics, ETH, 8092 Z\"urich, Switzerland}
\email{benjamin.sudakov@math.ethz.ch}

\thanks{Research supported in part by: (WS) Institute for Mathematical Research (FIM), ETH Z\"urich and Israel Science Foundation grant 1147/14; (BS) SNSF grant 200021-149111.}

\begin{document}

\begin{abstract}
  A set of elements of a finite abelian group is called sum-free if it contains no Schur triple, i.e., no triple of elements $x,y,z$ with $x+y=z$. The study of how large the largest sum-free subset of a~given abelian group is had started more than thirty years before it was finally resolved by Green and Ruzsa a decade ago. We address the following more general question. Suppose that a set $A$ of elements of an abelian group $G$ has cardinality $a$. How many Schur triples must $A$ contain? Moreover, which sets of $a$ elements of $G$ have the smallest number of Schur triples? In this paper, we answer these questions for various groups $G$ and ranges of~$a$.
\end{abstract}

\maketitle

\section{Introduction}

\label{sec:introduction}

A typical problem in extremal combinatorics has the following form: What is the largest size of a structure which does not contain any forbidden configurations? Once this extremal value is known, it is very natural to ask how many forbidden configurations one is guaranteed to find in every structure of a certain size that is larger than the extremal value. There are many results of this kind. Most notably, there is a very large body of work on the problem of determining the smallest number of $k$-vertex cliques in a graph with $n$ vertices and $m$~edges, attributed to Erd{\H{o}}s and Rademacher; see~\cite{Er62, Er69, ErSi83, LoSi83, Ni11, Ra08, Re12}. In extremal set theory, there is an extension of the celebrated Sperner's theorem, where one asks for the minimum number of chains in a family of subsets of $\{1, \ldots, n\}$ with more than $\binom{n}{\lfloor n/2 \rfloor}$ members; see~\cite{DaGaSu13, DoGrKaSe13, ErKl74, Kl68}. Another example is a recent work in \cite{DaGaSu14}, motivated by the classical theorem of Erd\H{o}s, Ko, and Rado. It studies how many disjoint pairs must appear in a $k$-uniform set system of a certain size.

Analogous questions have been studied in the context of Ramsey theory. Once we know the maximum size of a~structure which does not contain some unavoidable pattern, we may ask how many such patterns are bound to appear in every structure whose size exceeds this maximum. For example, a well-known problem posed by Erd{\H{o}}s is to determine the minimum number of monochromatic $k$-vertex cliques in a $2$-colouring of the edges of~$K_n$; see, e.g.,~\cite{Co12, FrRo93, Th89}. This may be viewed as an extension of Ramsey's theorem. Another example is an extension of the famous theorem of Erd\H{o}s and Szekeres~\cite{ErSz35}, which states that any sequence of more than $k^2$ numbers contains a monotone (that is, monotonically increasing or monotonically decreasing) subsequence of length $k+1$. Here, one may ask what the  minimum number of monotone subsequences of length $k+1$ contained in a sequence of $n$ numbers is; see \cite{BaHuLiPiUdVo, My02, SaSu}.

In this paper, we consider a similar Erd\H{o}s--Rademacher-type generalisation of a classical problem in additive combinatorics. Recall that a \emph{Schur triple} in an abelian group $G$ is a triple of elements $x,y,z$ of $G$, not necessarily distinct, satisfying $x+y=z$. A set $A$ of elements of $G$ is called \emph{sum-free} if it contains no Schur triples. The study of sum-free sets in abelian groups goes back to the work of Erd\H{o}s~\cite{E65}. In 1965, he proved  that any set of $n$ non-zero integers contains a sum-free subset of size at least $n/3$ and asked whether the fraction $1/3$ could be improved. Despite significant interest in this problem, the matching upper bound was proved only recently by Eberhard, Green, and Manners~\cite{EbGrMa14}, who constructed a sequence of sets showing that Erd\H{o}s' result is asymptotically tight.  

A related question, which is also more than forty years old, is to determine how large the largest sum-free subset of a given finite abelian group is. The following two simple observations provide strong lower bounds for this quantity. First, note that by considering the `middle' interval of an appropriate length, one sees that the cyclic group $\Z_m$ contains a sum-free subset with $\lfloor\frac{m+1}{3}\rfloor$ elements. Second, if $G$ is an abelian group, $H$ is a subgroup of $G$, $\pi \colon G \to G / H$ is the canonical homomorphism, and $B$ is a sum-free subset of $G/H$, then the set $\pi^{-1}(B) \subseteq G$ is also sum-free. The appearance of the expression $\lfloor \frac{m+1}{3} \rfloor$ above explains why the following nomenclature is commonly used in this context.

\begin{dfn}
  Let $G$ be an abelian group of order $n$. We say that $G$ is of: (i) \emph{type~\tI} if $n$ has a prime factor $p$ satisfying $p \equiv 2 \pmod 3$; (ii) \emph{type~\tII} if $n$ has no such prime factor but $3$ divides $n$; (iii) \emph{type~\tIII} otherwise, i.e., if each prime factor $p$ of $n$ satisfies $p \equiv 1 \pmod 3$.
\end{dfn}

Using the above two observations, one can check that if $G$ is an abelian group with $n$ elements, then the largest sum-free set in $G$ has size at least
\begin{itemize}
\item
  $(\frac{1}{3}+\frac{1}{3p})n$ if $G$ is of type~\tI\ and $p$ is the smallest prime factor of $n$ with $p \equiv 2 \pmod 3$,
\item
  $\frac{n}{3}$ if $G$ is of type~\tII,
\item
  $(\frac{1}{3}-\frac{1}{3m})n$ if $G$ is of type~\tIII\ and $m$ is the largest order of an element in $G$.
\end{itemize}
It turns out that these simple lower bounds are actually tight, but the task of showing that this is indeed the case took more than thirty five years. This was first proved by Diananda and Yap~\cite{DiYa69} for groups of types \tI\ and \tII\ and in~\cite{RS, Y1, Y2} for some groups of type~\tIII. Only many years later, Green and Rusza~\cite{GrRu05} established it for all groups.

Motivated by these results on the size of the largest sum-free sets, we consider the following more general questions.

\begin{prob}
  \label{prob:main}
  Let $A$ be an $a$-element subset of a finite abelian group $G$. How many Schur triples must $A$ contain? Which sets of $a$ elements of $G$ have the minimum number of Schur triples?
\end{prob}

In this paper, we answer these questions for various groups $G$ and ranges of $a$. Some estimates for the number of Schur triples in large subsets of abelian groups appeared already in~\cite{GrRu05, LeLuSc01}, but to the best of our knowledge, we are the first to explicitly consider these questions and obtain exact results.

Given a subset $A$ of an abelian group, we shall denote by $\ST(A)$ the number of Schur triples contained in $A$. More precisely, we let
\[
\ST(A) = \left|\left\{ (x,y,z) \in A^3 \colon x+y=z \right\}\right|,
\]
so that if $x+y=z$ and $x \neq y$, then we consider  $(x,y,z)$ and $(y,x,z)$ as different triples.

Our first result concerns cyclic groups of prime order. In this case, we derive a complete answer to both parts of Problem~\ref{prob:main} from a classical result of Pollard~\cite{Po74} and its stability counterpart due to Nazarewicz, O'Brien, O'Neill, and Staples~\cite{NaOBrONeSt07}.

\begin{thm}
  \label{thm:Zp}
  Suppose that $p$ is an odd prime and order the elements of the $p$-element cyclic group $\Z_p$ as $x_1, \ldots, x_p$, where
  \[
  x_{2i} = \frac{p-1}{2} + i \qquad \text{and} \qquad x_{2i+1} = \frac{p-1}{2} - i.
  \]
  For every $A \subseteq \Z_p$ with $a$ elements,
  \begin{equation}
    \label{eq:STA-Zp}
    \ST(A) \ge \ST(\{x_1, \ldots, x_a\}) =
    \begin{cases}
      0, & \text{if $a \le \frac{p+1}{3}$,} \\
      \left\lfloor \frac{3a-p}{2} \right\rfloor \left\lceil \frac{3a-p}{2} \right\rceil, & \text{if $a > \frac{p+1}{3}$}.
    \end{cases}
  \end{equation}
  Moreover, if $\ST(\{x_1, \ldots, x_a\}) > 0$, then equality holds above only if $A = \varphi(\{x_1, \ldots, x_a\})$ for some $\varphi \in \Aut(\Z_p)$, that is, if $A = \xi \cdot \{x_1, \ldots, x_a\}$ for some nonzero $\xi \in \Z_p$.
\end{thm}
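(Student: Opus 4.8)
The plan is to obtain \eqref{eq:STA-Zp} from Pollard's theorem and the uniqueness statement from the stability (equality) counterpart of Pollard's theorem due to~\cite{NaOBrONeSt07}. Throughout, for $z \in \Z_p$ write $r_A(z) = |\{(x,y) \in A \times A : x+y=z\}|$, so that $\ST(A) = \sum_{z \in A} r_A(z)$ and $\sum_{z \in \Z_p} r_A(z) = a^2$. I would first record the elementary but useful fact that $\ST$ is invariant under every dilation $B \mapsto \xi B$ with $\xi \in \Z_p \setminus \{0\}$, since $(x,y,z) \mapsto (\xi x, \xi y, \xi z)$ is a bijection between the Schur triples of $B$ and those of $\xi B$.

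For the lower bound, fix an integer $t$ with $1 \le t \le a$. Since $r_A(z) \ge \min\{r_A(z), t\}$ for every $z$, while the $p-a$ elements outside $A$ contribute at most $t(p-a)$ to $\sum_z \min\{r_A(z),t\}$, Pollard's theorem yields
\[
\ST(A) \ \ge\ \sum_{z \in A} \min\{r_A(z),t\} \ =\ \sum_{z \in \Z_p}\min\{r_A(z),t\} - \sum_{z \notin A}\min\{r_A(z),t\} \ \ge\ t(2a-t) - t(p-a) \ =\ t(3a-p-t).
\]
Choosing $t = t_0 := \lfloor \tfrac{3a-p}{2}\rfloor$ --- which satisfies $1 \le t_0 \le a$ exactly when $a > \tfrac{p+1}{3}$ --- gives $\ST(A) \ge \lfloor\tfrac{3a-p}{2}\rfloor\lceil\tfrac{3a-p}{2}\rceil$. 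When $a \le \tfrac{p+1}{3}$ the claimed minimum is $0$, and it is attained because $\{x_1,\dots,x_a\}$ is then sum-free (with $3a-1\le p$ the support of the relevant counting function below misses the relevant residue class). To finish \eqref{eq:STA-Zp} it remains to check that $\ST(\{x_1,\dots,x_a\})$ equals the claimed value when $a > \tfrac{p+1}{3}$: one writes $\{x_1,\dots,x_a\}$ as the interval $I_{s_0} = \{s_0, s_0+1, \dots, s_0+a-1\}$ with $s_0 = \lceil\tfrac{p-a}{2}\rceil$, notes that the Schur triples of $I_s$ correspond to solutions of $i+j-k \equiv -s \pmod p$ with $i,j,k \in \{0,\dots,a-1\}$, and observes that the number $N(v)$ of such solutions with $i+j-k = v$ is a symmetric, unimodal function of $v$ supported on $\{-(a-1),\dots,2(a-1)\}$ with $N\big({-}(a-1)+\ell\big) = \binom{\ell+2}{2}$ near the ends of its support. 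A short computation --- splitting according to the parities of $a$ and of $3a-p$, and noting that the balanced placement $s = s_0$ makes the relevant residue class meet the support of $N$ in two points lying symmetrically near the two ends --- shows that this count is exactly $\lfloor\tfrac{3a-p}{2}\rfloor\lceil\tfrac{3a-p}{2}\rceil$.

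For the characterisation of equality, suppose $\ST(A) = \lfloor\tfrac{3a-p}{2}\rfloor\lceil\tfrac{3a-p}{2}\rceil > 0$. Then every inequality in the displayed chain must be an equality for $t = t_0$; in particular, equality holds in Pollard's theorem for this value of $t$. Since $1 \le t_0 < a$, $|A| \ge 2$ and $A \ne \Z_p$, the degenerate equality cases are excluded, and the equality (stability) theorem of~\cite{NaOBrONeSt07} --- together with Vosper's theorem, which handles the boundary case $t_0 = 1$ --- forces $A$ to be an arithmetic progression, say $A = \eta + d\{0,1,\dots,a-1\}$ with $d \ne 0$. Applying the dilation by $d^{-1}$ and using the dilation-invariance of $\ST$, we may assume $A = I_s$ is an interval. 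Running the solution-counting computation above as a function of $s$, the quantity $\ST(I_s) = \sum_{v} N(v)$, taken over those $v$ in the support of $N$ with $v \equiv -s \pmod p$, is minimised exactly when the residue class of $-s$ meets the support in the balanced way, which happens precisely for $s = s_0$ and, when $3a-p$ is even, also for $s = s_0+1$. In the latter case $I_{s_0+1} = -I_{s_0} = -\{x_1,\dots,x_a\}$, which is again a dilate of $\{x_1,\dots,x_a\}$. Undoing the dilation, $A = \varphi(\{x_1,\dots,x_a\})$ for some $\varphi \in \Aut(\Z_p)$, as required.

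The main obstacle is the uniqueness half, for two reasons. First, one must be certain that the equality case of Pollard's theorem for $t = t_0$ really does yield an arithmetic progression and not one of the exceptional configurations; this is where one needs the precise statement of~\cite{NaOBrONeSt07} and a separate appeal to Vosper's theorem in the boundary case $t_0 = 1$. Second, the combinatorial heart of the argument is the claim that, among all length-$a$ intervals of $\Z_p$ (equivalently, all length-$a$ arithmetic progressions up to dilation), only the balanced interval and its reflection minimise the number of Schur triples; proving this cleanly requires carefully ruling out the competing ``wrap-around patterns'' (a residue class hitting the support of $N$ once, or three times) and tracking the two parity cases --- precisely the same bookkeeping needed for the sharpness half of \eqref{eq:STA-Zp}.
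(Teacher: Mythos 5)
Your lower-bound argument is the paper's argument in different notation: $\sum_{z}\min\{r_A(z),t\}$ is exactly $N_1+\cdots+N_t$, so the chain you display is the paper's \eqref{eq:STA} with $r=t$, and choosing $t$ near $\frac{3a-p}{2}$ gives \eqref{eq:STA-final}; this part is fine (modulo the unstated check that $\min\{p,2a-t_0\}=2a-t_0$, which does hold, and the routine but only sketched computation of $\ST(\{x_1,\ldots,x_a\})$). The first genuine problem is in the uniqueness half, where you assert that "since $1\le t_0<a$, $|A|\ge 2$ and $A\ne\Z_p$, the degenerate equality cases are excluded." This is not true. Applying Theorem~\ref{thm:Pollard-stability} with $B=A$ and $r=t_0=\lfloor\frac{3a-p}{2}\rfloor=a-\lceil\frac{p-a}{2}\rceil$, condition \ref{item:Pollard-stability-ii} holds whenever $a\ge p-1$, condition \ref{item:Pollard-stability-iii} can hold for $a\in\{p-2,p-1\}$, and condition \ref{item:Pollard-stability-i} holds at $a=p$ (where, moreover, $t_0=a$, so your inequality $t_0<a$ fails). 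In those ranges the stability theorem by itself does not force $A$ to be an arithmetic progression; one needs the additional observation—made explicitly in the paper—that any set whose complement has at most two elements (or three elements and is symmetric, with the paper's choice $r=\lceil\frac{3a-p}{2}\rceil$) is automatically an arithmetic progression. This is an easy fix, but as written your justification is wrong, not merely terse. (The appeal to Vosper's theorem is unnecessary: the cited theorem allows $r=1$.)

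The second and more substantial gap is the finishing step. The paper completes the characterisation by exploiting a further equality condition that you discard: equality forces every element of $A^c$ to have at least $R$ representations, i.e.\ $A^c\subseteq S_R$, and after dilating $A$ to an interval one computes $S_R$ explicitly, which pins down the position of the interval and yields $A=\{x_1,\ldots,x_a\}$ or $\pm\{x_1,\ldots,x_a\}$. You instead rely on the claim that among all length-$a$ intervals of $\Z_p$ only the balanced interval and its reflection minimise the number of Schur triples, but you never prove this; you describe the intended computation with the profile $N(v)$ and then yourself identify the required case analysis (residue class meeting the support of $N$ in one, two, or three points, plus the two parity cases) as the main outstanding obstacle. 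That claim is the combinatorial heart of the uniqueness statement, and the edge cases are genuinely delicate: for instance, when $3a-2=p$ every residue class meets the support in exactly one point, so the "two points lying symmetrically near the ends" picture does not apply at all and one must compare $N$-values in the middle of the support against $\lfloor\frac{3a-p}{2}\rfloor\lceil\frac{3a-p}{2}\rceil$ directly. So the uniqueness half of your proposal is incomplete as it stands; either carry out that interval-minimisation analysis in full, or follow the paper and use the neglected equality condition $A^c\subseteq S_R$, which makes the position of the interval drop out with almost no case analysis.
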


Our second result concerns groups of type~\tI. We shall say that a group $G$ is of \emph{type \tIp} if $p$ is the smallest prime factor of $|G|$ among those satisfying $p \equiv 2 \pmod 3$. Suppose that $G$ is of type \tI$(p)$. It was proved by Diananda and Yap~\cite{DiYa69} that the largest sum-free set in $G$ has $(\frac{1}{3} + \frac{1}{3p})|G|$ elements. We shall generalise this result by answering both questions in Problem~\ref{prob:main} under the assumption that $|A| \le (\frac{1}{3} + \frac{1+\delta}{3p})|G|$ for some absolute constant $\delta$.

\begin{thm}
  \label{thm:type-I}
  There exists a positive constant $\delta$ such that the following holds. Suppose that $p$ is a prime satisfying $p \equiv 2 \pmod 3$ and let $G$ be a group of type \tIp. If $0 \le t \le \delta|G|/p$, then for every $A \subseteq G$ with $(\frac{1}{3} + \frac{1}{3p})|G| + t$ elements,
  \begin{equation}
    \label{eq:type-I}
    \ST(A) \ge \frac{3t|G|}{p} + \1[p \neq 2] \cdot t^2.
  \end{equation}
\end{thm}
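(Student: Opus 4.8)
The plan is to project onto a $\Z_p$-quotient and reduce the problem to a near-extremal stability statement.

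\emph{Set-up.} Fix a surjection $\pi\colon G\to\Z_p$ with kernel $H$, so $h:=|H|=|G|/p$; for $i\in\Z_p$ put $A_i=A\cap\pi^{-1}(i)$ and $a_i=|A_i|$, so $\sum_i a_i=|A|=\frac{(p+1)h}{3}+t$. Every Schur triple of $A$ projects to a Schur triple $(i,j,i+j)$ of $\Z_p$, whence
\[
\ST(A)=\sum_{(i,j)\in\Z_p^2}N_{ij},\qquad N_{ij}:=\bigl|\{(x,y,z)\in A_i\times A_j\times A_{i+j}:x+y=z\}\bigr|.
\]
Throughout I would lean on two cheap lower bounds for $N_{ij}$: the Cauchy--Davenport/Kneser bound inside $H$, which in symmetric form reads $N_{ij}\ge\max\bigl(0,\,a_{i+j}(a_i+a_j-h),\,a_i(a_j+a_{i+j}-h),\,a_j(a_{i+j}+a_i-h)\bigr)$; and the fact that if $\pi^{-1}(i+j)\subseteq A$ then $N_{ij}=a_ia_j$ exactly, while in general $N_{ij}\ge a_ia_j-(h-a_{i+j})\min(a_i,a_j)$.

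\emph{Reduction to the near-extremal case.} Fix an absolute $\varepsilon>0$. If $A$ is not $\varepsilon|G|$-close to sum-free, the arithmetic removal lemma gives $\ST(A)=\Omega_\varepsilon(|G|^2)$, dwarfing the target $\frac{3t|G|}{p}+t^2\le4\delta h^2$ once $\delta$ is small; nothing more is needed. Otherwise, deleting few elements leaves a sum-free set within $\varepsilon|G|$ of the Diananda--Yap maximum, and a stability form of the Diananda--Yap theorem (derivable from Kneser's theorem) places $A$ within $O(\varepsilon)|G|$ of $(\pi')^{-1}(S)$ for some surjection $\pi'\colon G\to\Z_q$ with $q\equiv2\pmod3$, $q\mid|G|$, and some maximum sum-free $S\subseteq\Z_q$. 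If $q>p$, then $A$ carries at least $\bigl(\tfrac1{3p}-\tfrac1{3q}\bigr)|G|\gtrsim|G|/p$ elements outside this \emph{maximal} sum-free set; feeding these into the ``push into a full fibre'' bound below (inside $\Z_q$), or, when they spread over many $\Z_q$-fibres, lifting the Schur triples Theorem~\ref{thm:Zp} guarantees in $\Z_q$, forces $\ST(A)\gtrsim|G|^2/p^2$, again beating the target. So I may take $q=p$ and, after a dilation, $S=I:=\{k+1,\dots,2k+1\}$, where $p=3k+2$.

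\emph{The near-extremal count.} Write $a_s=h-u_s$ for $s\in I$ and $a_c=v_c$ for $c\notin I$, all deficiencies small and $\sum_{c\notin I}v_c=t+\sum_{s\in I}u_s\ge t$. The features of $I$ that do the work: $I=-I$; $\rho(c):=|I\cap(I-c)|=r_{I+I}(c)\ge1$ for every $c\notin I$, with equality $\rho(c)=1$ exactly at the edge cosets $c\in\{k,2k+2\}$ (a single coset when $p=2$); and $2c\in I$ at both edge cosets unless $p=2$. Now I would add up three families of forced triples. (1)~For $c\notin I$ and each of the $\rho(c)$ values $s\in I$ with $c+s\in I$, the pairs $(c,s),(s,c)$ give $N_{c,s},N_{s,c}\ge v_ch-O(u)$ since $\pi^{-1}(c+s)$ is nearly full. (2)~For each of the $r_{I+I}(c)=\rho(c)$ pairs $(s,s')\in I^2$ with $s+s'=c$, summing over $z\in A_c$ gives $N_{s,s'}\ge v_ch-O(u)$. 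These combine to $\ST(A)\ge3h\sum_c\rho(c)v_c-(\text{error})\ge3ht-(\text{error})$, and once the closeness parameter absorbs the error the leading term $\frac{3t|G|}{p}$ emerges. (3)~For $p\neq2$ the extra $t^2$ comes from quadratic terms: $N_{c,c'}$ with $c,c'\notin I$, $c+c'\in I$ (equal to $v_cv_{c'}$ when $\pi^{-1}(c+c')\subseteq A$), together with the ``pass a full fibre between two extra cosets'' contributions $N_{c,s}$, $c+s=c'\notin I$; these total at least $\bigl(\sum_c v_c\bigr)^2\ge t^2$ once the surplus in (1)+(2) from cosets with $\rho(c)\ge2$ is used to cover the remaining pairs.

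\emph{Main obstacle.} The heart of the matter is controlling the error terms in the near-extremal count \emph{uniformly in $p$}, so that the closeness parameter, the removal-lemma parameter, and ultimately $\delta$ can all be chosen absolute: each deficiency $u_s$ and extra mass $v_c$ must be charged precisely, and one must treat separately the regime where the total deficiency exceeds a small multiple of $t$, showing the deficiency then spawns enough further Schur triples to compensate the loss. (Disposing cleanly of the closeness-to-a-larger-prime case is a lesser nuisance of the same flavour.) The fine combinatorics of $r_{I\pm I}$ driving step~(3) is exactly what produces the indicator $\1[p\neq2]$ and the sharp coefficient $1$ on $t^2$.
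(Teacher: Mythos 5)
Your near-extremal count (steps (1)--(3)) is essentially the same as the paper's final stability analysis: the edge cosets $\{\ell,2\ell+2\}$ contributing $3n/m$ triples each versus $\ge 4n/m$ for the other cosets, the $t^2$ term coming from pairs of extra elements inside the two edge cosets, and the indicator $\1[p\neq 2]$ from the degenerate case $\ell=0$. The problem is the reduction to that near-extremal configuration, which is exactly the step you defer to the ``main obstacle'' paragraph, and it is a genuine gap rather than a routine nuisance. With an \emph{absolute} $\eps$, the removal-lemma dichotomy only tells you that either $\ST(A)=\Omega_\eps(|G|^2)$ or $A$ is within $O(\eps)|G|$ of $(\pi')^{-1}(S)$. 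But in your count the error terms are of order $u\cdot|G|/p$ per coset, where $u$ is the total deficiency/excess; for the leading term $3t|G|/p$ (let alone the sharp $t^2$) to survive, you need $u=O(t)=O(\delta|G|/p)$, i.e.\ closeness at scale $t$, not $\eps|G|$. No choice of absolute constants closes this, because $t$ can be as small as $0$ or $1$ while $\eps|G|$ is of order $|G|$; and taking $\eps\sim 1/p$ instead breaks the other horn of the dichotomy (the removal constant then no longer dominates $3t|G|/p+t^2$ uniformly). A further unproved ingredient is the ``stability form of Diananda--Yap'' you invoke: it is not available off the shelf with error $O(t)$, and your fallback for the case $q>p$ also does not obviously beat the target when $q$ is much larger than $p$ (each extra element then forces only about $|G|/q$ triples).

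The paper closes precisely this gap with a direct argument in place of the removal lemma. It removes the set $C_*$ of at most $4t$ ``popular'' elements (those $x$ with $|(x\pm A)\cap A|\ge |G|/(3p)$), having first checked that if both $C_-$ and $C_+$ have $\ge 4t$ elements then $\ST(A)\ge 4t|G|/p-48t^2$ already suffices. For $B=A\setminus C_*$ every element of $B-B$ has at least $|G|/(3p)$ representations as a difference of elements of $A$, so either $\ST(A)\ge |(B-B)\cap A|\cdot |G|/(3p)$ is large enough or $|(B-B)\cap A|\le 10t$, which forces $|B-B|\le 2|B|-|G|/p+15t$. Kneser's theorem applied to $B-B$ then produces a subgroup $H$ with $|H|\ge |G|/p-15t$ and $|(B+H)\setminus B|\le 7.5t$: closeness to a union of $H$-cosets at scale $t$, uniformly in $p$, and the bound $|G/H|<2p$ falls out for free, eliminating the $q>p$ case. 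If you want to salvage your outline, you would need to replace the removal-lemma dichotomy by something of this strength; as written, the proposal does not prove the theorem.
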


Our proof of Theorem~\ref{thm:type-I} will also yield the following characterisation of all sets achieving equality in~\eqref{eq:type-I}. Let $p$ and $G$ be as in the statement of the theorem and suppose that $p = 3k+2$. Let $\varphi \colon G \to \Z_p$ be an arbitrary surjective homomorphism, let $A_0 = \varphi^{-1}(\{k+1, \ldots, 2k+1\})$, and note that $A_0$ is a sum-free set with $(\frac{1}{3} + \frac{1}{3p})|G|$ elements. Given a $t$ with $0 \le t \le 2|G|/(7p)$, let $A_t'$ be an arbitrary sum-free subset of $\varphi^{-1}(\{k\})$ with $t$ elements\footnote{Such a set exists as if $k > 0$, then the set $\varphi^{-1}(\{k\})$ itself is sum-free and has $|G|/p$ elements; if $k = 0$, then $\varphi^{-1}(\{k\}) $ is a subgroup of $G$ with index $2$ and every nontrivial abelian group $H$ contains a sum-free set with at least $2|H|/7$ elements.} and let $A_t = A_0 \cup A_t'$. Then $\ST(A_t) = \frac{3t|G|}{p} + \1[p \neq 2] \cdot t^2$. Moreover, every set $A \subseteq G$ that achieves equality in~\eqref{eq:type-I} is of this form.

Our third result is a complete solution to Problem~\ref{prob:main} for the `hypercube', i.e., the group~$\Z_2^n$. Here, there is a very elegant way of describing a sequence of sets minimising the number of Schur triples among all subsets of $\Z_2^n$ of the same cardinality.

\begin{thm}
  \label{thm:Z_2n}
  Let $n$ be a positive integer. For each $a \in \{1, \ldots, 2^n-1\}$, let $A_a$ be the set of vectors in $\{0,1\}^n$ that are binary representations of the numbers $2^n-1, \ldots, 2^n-a$. Let $k$ be the unique integer satisfying $2^n - 2^k \le a < 2^n - 2^{k-1}$. Then for every $A \subseteq \Z_2^n$ with $a$ elements,
  \begin{equation}
    \label{eq:Z_2n}
    \ST(A) \ge \ST(A_a) = (3a+2^k-2^{n+1})(2^n-2^k).
  \end{equation}
\end{thm}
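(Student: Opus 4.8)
The plan is to prove Theorem~\ref{thm:Z_2n} by a direct compression/counting argument exploiting the rich structure of $\Z_2^n$. First I would set up the basic identity: since $x+y=z$ in $\Z_2^n$ is equivalent to $x+y+z=0$, the quantity $\ST(A)$ counts ordered triples $(x,y,z) \in A^3$ with $x+y+z=0$; writing $\widehat{\1_A}$ for the Fourier transform of the indicator function of $A$ over $\Z_2^n$, one has the clean formula
\begin{equation*}
\ST(A) = \sum_{\chi} \widehat{\1_A}(\chi)^3 \cdot 2^{n} \quad\text{(suitably normalised)},
\end{equation*}
which already shows $\ST(A) \ge 0$ and pins down the extremal behaviour to the distribution of $|\widehat{\1_A}(\chi)|$. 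However, since the theorem asks for an \emph{exact} minimiser for every cardinality $a$, I expect the cleaner route is a combinatorial induction on $n$ rather than pure Fourier analysis.

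The inductive step would split $\Z_2^n = \Z_2^{n-1} \times \Z_2$ into two cosets $A_0$ (last coordinate $0$) and $A_1$ (last coordinate $1$) of the index-two subgroup $H = \Z_2^{n-1}$. A triple $(x,y,z)$ with $x+y=z$ either has all three points in the same coset of $H$ with last coordinates summing correctly — which forces either all three in $A_0$, contributing $\ST_H(A_0)$, or exactly one in $A_0$ and (as an unordered count of which slot) the other two in $A_1$ — so that
\begin{equation*}
\ST(A) = \ST(A_0) + 3\cdot\big(\text{number of } (x,y,z):\ z \in A_0,\ x,y \in A_1,\ x+y = z\big),
\end{equation*}
where $\ST_H$ denotes the Schur-triple count inside the subgroup $H \cong \Z_2^{n-1}$. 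The cross term counts pairs $(x,y) \in A_1^2$ with $x+y \in A_0$; writing $A_1 = v + B$ for a fixed $v \in A_1$, one has $x+y \in H$ automatically and the condition becomes $b+b' \in A_0$ for $b,b' \in B \subseteq H$. The key structural fact one wants is that this cross term is minimised, given $|A_0|$ and $|A_1|$, by taking $A_0$ and $B$ to be "intervals" of the appropriate greedy/lexicographic form, and that the two intervals nest compatibly. This should follow from a Pollard-type or Kneser-type inequality on the sumset $B+B$ intersected with $A_0$ inside $\Z_2^{n-1}$; in fact for $\Z_2^n$ the relevant statement is a clean application of Harper's vertex-isoperimetric inequality / the Kruskal–Katona-type compression for the Boolean lattice, since the map $a \mapsto A_a$ is exactly the colex-type order on $\{0,1\}^n$.

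Concretely, the steps in order would be: (1) verify by direct computation that the claimed sets $A_a$ — binary representations of the top $a$ integers — satisfy $\ST(A_a) = (3a+2^k-2^{n+1})(2^n-2^k)$, which is routine once one notes $A_a$ itself splits nicely across the coset of the top bit (for $a \ge 2^{n-1}$ the set $A_a$ contains the whole coset with top bit $1$, reducing to the problem on $\Z_2^{n-1}$, and for $a < 2^{n-1}$ it lies entirely in that coset, a translate of an $A_{a}$-type set in $\Z_2^{n-1}$); (2) prove a compression lemma stating that replacing $A$ by its image under a symmetrisation that pushes mass toward one fixed maximal chain direction does not increase $\ST$; (3) iterate the compression until $A$ is monotone-down in every coordinate, i.e., a down-set, and among down-sets reduce to a lexicographic set by a further exchange argument; (4) conclude by the explicit formula from step (1). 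The main obstacle I anticipate is step (2)–(3): showing that the compression is monotone for $\ST$ is not automatic, because $\ST$ is a \emph{cubic} rather than quadratic functional in $\1_A$, so the standard two-coordinate switching arguments need care — one must check that for any two parallel "lines" in direction $e_i$, pushing elements to the side that agrees with the global ordering does not create more solutions of $x+y=z$ than it destroys, and this requires a short case analysis on how a Schur triple meets a pair of such lines. An alternative, possibly cleaner, resolution of this obstacle is to avoid compression entirely and instead run the coset induction from the previous paragraph directly, using the exact $\Z_2$-analogue of Theorem~\ref{thm:Zp}/Pollard's inequality — namely that for $B \subseteq \Z_2^{m}$ the number of representations $r_{B+B}$ restricted to any set $S$ of size $|A_0|$ is at least what the nested-interval configuration gives — and then checking that equality propagates so that the optimal $A_0, A_1$ are forced to be of the stated form; the bookkeeping of the floor/ceiling-free closed form $(3a+2^k-2^{n+1})(2^n-2^k)$ across the two regimes $a \lessgtr 2^{n-1}$ is then the remaining (purely computational) task.
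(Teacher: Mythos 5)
Your skeleton (exact computation of $\ST(A_a)$ plus an induction on $n$ through an index-two coset decomposition, using that every Schur triple contains an even number of ``odd'' elements) is the right shape, but the heart of the argument is missing. You split along a \emph{fixed} coordinate and then assert that the cross term $\#\{(x,y)\in A_1^2 \colon x+y\in A_0\}$ is minimised, for given $|A_0|,|A_1|$, by a nested ``interval'' configuration, claiming this follows from ``a Pollard-type or Kneser-type inequality'' or from Harper/Kruskal--Katona. None of these applies. Pollard's theorem is specific to $\Z_p$: its lower bound $|B+B|\ge\min\{p,2|B|-1\}$ fails badly in $\Z_2^m$, where $B$ may be a coset of a subgroup and $|B+B|=|B|$ --- indeed the extremal sets in this theorem (complements of subgroups) exist precisely because of that failure, so any correct substitute must carry the subgroup structure; Kneser's theorem only bounds $|B+B|$ in terms of the stabiliser and does not give the exact minimum of the restricted representation count you need. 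Harper's inequality and Kruskal--Katona control vertex boundaries and shadows in the Hamming graph, which bear no direct relation to the number of solutions of $x+y=z$; and the monotonicity of your proposed compressions for the cubic functional $\ST$ is exactly the point you concede you cannot verify. So your ``key structural fact'' is an unproved statement of essentially the same depth as the theorem itself (a two-set version of it), and the induction does not close as written.

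For comparison, the paper overcomes this with two ingredients you do not have. First, the index-two subgroup $H$ is chosen \emph{adaptively}, to minimise $|A\cap H|$; with $\Ae=A\cap H$, $\Ao=A\setminus H$, only the trivial bound $|(x+\Ao)\cap\Ao|\ge 2|\Ao|-|H|$ is used for the mixed triples, combined with the induction hypothesis for $\ST(\Ae)$, and this suffices only when $|\Ae|$ lies below an explicit threshold. Second --- and this is the case your plan cannot see --- when $A$ meets \emph{every} index-two subgroup in a large set (e.g.\ a spread-out, pseudorandom-like $A$), the minimality of $|A\cap H|$ translates into a lower bound on the smallest eigenvalue of the Cayley graph $\cG_A$, and the Alon--Chung bound then gives $\ST(A)=2e(\cG_A[A])>\ST(A_a)$ after elementary polynomial estimates. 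You need a mechanism of this second kind (or a genuinely proved compression/exchange lemma for $\ST$) to handle sets that are balanced across all hyperplanes; without one, the fixed-coordinate induction has no way to finish.
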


Our proof of Theorem~\ref{thm:Z_2n} will also yield the following characterisation of sets achieving equality in~\eqref{eq:Z_2n}. Let $a$, $k$, and $n$ be as in the statement of the theorem. For every $a$-element $A \subseteq \Z_2^n$ satisfying $\ST(A) = \ST(A_a)$, the following holds. There is a subgroup $K < \Z_2^n$ with $2^k$ elements such that $\Z_2^n \setminus K \subseteq A$ and $A \cap K$ is sum-free. One may check that each $A$ of this form satisfies $\ST(A) = \ST(A_a)$. As clearly each such $K$ is isomorphic to~$\Z_2^k$, one may say a little more about the structure of $A \cap K$ for certain ranges of $a$. In particular, it was proved in~\cite{ClDuRo90, ClPe92} that each sum-free subset of $\Z_2^k$ with more than $5 \cdot 2^{k-4}$ elements is contained in some maximum-size sum-free subset of $\Z_2^k$, i.e., the odd coset of some subgroup of index two. (Smaller sum-free sets of $\Z_2^k$ do not admit such an elegant structural description. For example, if $k \ge 4$, then the set $\{e_1, e_2, e_3, e_4, e_1+e_2+e_3+e_4\} + \vspan\{e_5, \ldots, e_k\}$, where $e_1, \ldots, e_k$ is a basis of $\Z_2^k$ as a vector space over $\Z_2$, is sum-free, has $5 \cdot 2^{k-4}$ elements, and is not contained in any maximum-size sum-free subset of $\Z_2^k$.)

Finally, we consider Problem~\ref{prob:main} for groups of type~\tII, i.e., groups whose order is divisible by three. As it turns out, here the answer is much less `uniform' among all groups in this class. To be more precise, given an abelian group $G$, let $f_G$ be the function defined by
\begin{equation}
  \label{eq:fG}
  f_G(a) = \min\{\ST(A) \colon A \subseteq G \text{ and } |A| = a\}
\end{equation}
and let $a_G$ be the largest cardinality of a sum-free set in $G$, i.e., $a_G = \max\{a \colon f_G(a) = 0\}$. On the one hand, if $G = \Z_3^n$, then $f_G$ `behaves' similarly as in the case when $G$ is of type \tIp\ for some fixed prime $p$, namely, $f_G(a+1) - f_G(a)$ is of order $|G|$ for all $a$ in an interval of length $\Omega(|G|/p)$ starting at $a_G$. On the other hand, if $G = \Z_3 \times \Z_p$, where $p$ is a prime with $p \equiv 1 \pmod 3$, then $f_G(a)$ is merely of order $(a-a_G)^2$ for all $a \ge a_G$. (As $G$ is of type~\tII, $a_G = p$.)

\begin{thm}
  \label{thm:Z_3n}
  There exists a positive constant $\delta$ such that the following holds. Let $n$ be a positive integer and suppose that $0 \le t \le \delta 3^{n-1}$. Then for every $A \subseteq \Z_3^n$ with $3^{n-1}+t$ elements,
  \begin{equation}
    \label{eq:Z_3n}
    \ST(A) \ge 3^{n-1}t + t^2.
  \end{equation}
  Moreover, \eqref{eq:Z_3n} holds with equality when $A$ is the union of $\{x \in \Z_3^n \colon x_1 = 1\}$ and an arbitrary $t$-element sum-free subset of $\{x \in Z_3^n \colon x_1 = 2\}$.
\end{thm}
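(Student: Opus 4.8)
The plan is to combine a stability result for sum-free sets with a direct count of Schur triples across the fibres of a homomorphism onto $\Z_3$. Throughout write $N=3^{n-1}$ and suppose, towards a contradiction, that $\ST(A)<Nt+t^2$; since $t\le\delta N$ this gives $\ST(A)<2\delta N^2$, which is $o(3^{2n})$ once $\delta$ is small. A stability argument for sum-free subsets of $\Z_3^n$ then produces a surjective homomorphism $\varphi\colon\Z_3^n\to\Z_3$ with $|A\,\triangle\,\varphi^{-1}(1)|\le\eps 3^n$, where $\eps\to0$ as $\delta\to0$. (One route: apply the arithmetic removal lemma to delete $o(3^n)$ elements of $A$ and obtain a sum-free set of size $(\tfrac13-o(1))3^n$, then invoke the stability form of the classification of maximum sum-free subsets of $\Z_3^n$ as the sets $\varphi^{-1}(c)$ with $\varphi$ surjective and $c\ne0$; alternatively, argue Fourier-analytically as in the proof of Theorem~\ref{thm:type-I}, cf.\ \cite{GrRu05,LeLuSc01}.) Fix such a $\varphi$, put $H=\ker\varphi\cong\Z_3^{n-1}$ and $C_i=\varphi^{-1}(i)$ (so $C_0=H$, $C_1+C_1=C_2$, $C_2+C_2=C_1$, $C_1+C_2=H$), and set $A_i=A\cap C_i$, $a_i=|A_i|$. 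Then $a_0+a_2+(N-a_1)=|A\,\triangle\,C_1|\le\eps 3^n$, so $s:=N-a_1=a_0+a_2-t$ satisfies $0\le s\le3\eps N$, while $a_0,a_2\le3\eps N$ and $t\le\delta N$ are all tiny next to $N$.

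Next I would expand $\ST(A)$ by classifying each Schur triple $(x,y,z)$ according to $(\varphi(x),\varphi(y),\varphi(z))$, one of the nine admissible patterns, and translating the cosets involved into $H$, so that each class becomes a count $|\{(u,v)\in X\times Y:u+v\in Z\}|$ inside $H\cong\Z_3^{n-1}$ with $X,Y,Z$ among $A_0,A_1,A_2$. Only two elementary facts about $H$ are needed: $|X\cap Y|\ge|X|+|Y|-|H|$ for $X,Y\subseteq H$, and $\sum_{w\in H}|\{(u,v)\in X\times Y:u+v=w\}|=|X||Y|$. By the first, since $A_1+A_1\subseteq C_2$, every $z\in C_2$ has at least $2a_1-N=N-2s$ representations as a sum of two elements of $A_1$, so pattern $(1,1,2)$ contributes at least $a_2(N-2s)$. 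By the second, since $A_2+A_2\subseteq C_1$ with $|C_1\setminus A_1|=s$, pattern $(2,2,1)$ contributes at least $a_2^2-sa_2$; and since $A_0+A_1\subseteq C_1$, patterns $(0,1,1)$ and $(1,0,1)$ together contribute at least $2(a_0a_1-sa_0)=2a_0(N-2s)$. Dropping the (non-negative) contributions of the remaining patterns and using $a_0+a_2=t+s$ gives
\[
\ST(A)\ \ge\ a_2(N-2s)+2a_0(N-2s)+\max\{0,a_2(a_2-s)\}\ =\ (N-2s)(a_0+t+s)+\max\{0,a_2(a_2-s)\}.
\]

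It then remains to check that the right-hand side is at least $Nt+t^2$, i.e.\ that $Na_0+Ns-2sa_0-2st-2s^2+\max\{0,a_2(a_2-s)\}\ge t^2$, which is a two-line case distinction. If $a_2\ge s$, substitute $a_2=t+s-a_0$ into $a_2(a_2-s)$ and regroup the left-hand side as $t^2+a_0(N+a_0-2t-3s)+s(N-t-2s)$, both bracketed factors being positive once $a_0,s=O(\eps N)$ and $t=O(\delta N)$. If $a_2<s$, then $a_0=t+s-a_2>t$, so $a_0(N-2s)>tN/2\ge t^2$ already suffices and the other terms only help. Tracking when these inequalities are tight shows that equality in~\eqref{eq:Z_3n} can occur only when $s=0$, $a_0=0$ and $a_1=N$ — that is, $A_1=\varphi^{-1}(1)$ and $A_2$ is an arbitrary $t$-element (hence automatically sum-free) subset of $\varphi^{-1}(2)$ — which is exactly the asserted family of extremal sets. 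The main obstacle is the first step: the stability conclusion must be forced by the \emph{number} of Schur triples, not by mere near-maximality, so that $A$ really is pinned to the three fibres of a single $\varphi$ with all auxiliary parameters $O(\eps N)$; after that the estimates are routine, the one delicate point being to retain the full $2a_0N$ term, which is precisely what eliminates the otherwise dangerous regime $a_0>0$, $s=0$, and to keep enough tightness to read off the extremal configuration.
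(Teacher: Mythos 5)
There is a genuine gap, and it sits exactly where you flag "the main obstacle": the structural claim that $\ST(A)<3^{n-1}t+t^2$ forces $|A\,\triangle\,\varphi^{-1}(1)|\le\eps 3^n$ for some surjective $\varphi\colon\Z_3^n\to\Z_3$, with $\eps$ below a fixed small absolute constant. None of the routes you gesture at delivers this as stated. The paper's own removal-type lemma (Proposition~\ref{prop:Green-Ruzsa-optimal}) is far too weak here: to apply it you must take $\eps\approx t/|G|$ to satisfy $|A|\ge(\tfrac13+\eps)|G|$, and its hypothesis then demands $\ST(A)\lesssim t^2$, whereas you only know $\ST(A)<3^{n-1}t+t^2$, larger by a factor of roughly $3^{n-1}/t$. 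Green's arithmetic removal lemma does apply to the hypothesis $\ST(A)\le 2\delta\cdot 3^{2(n-1)}$ and yields a sum-free $A'\subseteq A$ with $|A\setminus A'|=o(3^n)$, but that only reduces the problem to a stability classification of sum-free subsets of $\Z_3^n$ of size $(\tfrac13-o(1))3^n$ — itself a nontrivial theorem that you neither prove nor cite in a usable form. (The naive Kneser attack on such an $A'$ fails: $|A'+A'|\le|G|-|A'|$ is compatible with $|A'+A'|\ge 2|A'|-1$, so Kneser's theorem gives no periodicity without first confining $A'$ to a small coset.) Since the entire second half of your argument is conditioned on this structure, the proof as written is incomplete.

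Your fibre computation, granted the structure, is correct — I checked the algebra, including the identity $(N-2s)(a_0+t+s)+a_2(a_2-s)=t^2+a_0(N+a_0-2t-3s)+s(N-t-2s)+Nt$ and the case $a_2<s$ — and it is close in spirit to the paper's count of patterns $(0,2,2)$, $(2,0,2)$, $(2,2,1)$, $(1,1,2)$. But it is worth seeing how the paper avoids the missing stability input altogether. It splits on whether every index-$3$ subgroup $H$ satisfies $|A\cap H|\ge t$. If yes, the directed Cayley graph eigenvalue bound \eqref{eq:STA-Z3n-eigenvalues} gives $\ST(A)\ge\tfrac32 at\ge 3^{n-1}t+t^2$ outright, with no structural information needed. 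If no, the witnessing $H$ hands you a homomorphism with $a_0<t$ for free; the fibre count then handles $a_2\ge 2\cdot 3^{n-2}$, and the residual range $3^{n-1}/2<a_2<2\cdot 3^{n-2}$ is killed by applying Kneser's theorem to $B-B$ for a suitable $B\subseteq A_2$ — here Kneser does bite, precisely because $B-B$ lives inside the single coset $H$ of size $3^{n-1}<2|B|$. If you want to rescue your outline, the most economical fix is to replace your step one by this dichotomy (or to prove the $\Z_3^n$ stability statement yourself via a Kneser argument of the same flavour); as it stands, the key step is asserted rather than established.
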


\begin{prop}
  \label{prop:Z3Zp}
  Let $p$ be a prime. Then for every $a \in \{p+1, \ldots, 3p\}$, there exists an $a$-element set $A \subseteq \Z_3 \times \Z_p$ with
  \[
  \ST(A) \le 21(a-p)^2.
  \]
\end{prop}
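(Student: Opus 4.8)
Since $\gcd(3,p)=1$ whenever $p\neq3$, the group $\Z_3\times\Z_p$ is then cyclic of order $3p$, and I would first dispose of $p=3$ as a trivial special case: for $G=\Z_3^2$ one has $\ST(A)\le\ST(G)=81<21(a-3)^2$ for every $a\ge5$, while for $a=4$ it suffices to take a maximal sum-free coset together with the identity of $G$, which has exactly $7$ Schur triples. So assume $p\neq3$, identify $G$ with $\Z_{3p}$, and put $s=a-p\in\{1,\dots,2p\}$. The construction is to enlarge the ``middle third'' sum-free interval symmetrically: with $s_1=\lceil s/2\rceil$ and $s_2=\lfloor s/2\rfloor$, let
\[
A=\{\,p+1-s_1,\ p+2-s_1,\ \dots,\ 2p+s_2\,\}\subseteq\Z_{3p},
\]
a cyclic interval with $|A|=p+s=a$ (it is the sum-free set $\{p+1,\dots,2p\}$ when $s=0$). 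Write $r(w)=\bigl|\{(x,y)\in A^2:x+y=w\}\bigr|$, so that $\ST(A)=\sum_{w\in A}r(w)$ and $\sum_{w\in\Z_{3p}}r(w)=|A|^2$; the goal is to bound $\sum_{w\in A}r(w)$ by $21s^2$. I would split according to whether $2|A|$ exceeds $|G|$.

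Suppose first $2s>p$, so $2|A|>|G|$. Then every translate $w-A$ meets $A$ in at least $2|A|-|G|=2s-p$ elements, so $r(w)\ge 2s-p$ for all $w$, whence
\[
\ST(A)=|A|^2-\sum_{w\notin A}r(w)\le(p+s)^2-(2p-s)(2s-p)=3\bigl(p^2-ps+s^2\bigr).
\]
Since $p<2s<3s$ we have $(p-3s)(p+2s)<0$, i.e.\ $p^2-ps+s^2<7s^2$, so $\ST(A)<21s^2$. (This case does not use that $A$ is an interval.)

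Suppose now $2s\le p$. Then the sumset $J=\{x+y:x,y\in A\}$ is a cyclic interval of length $2|A|-1=2p+2s-1<3p$, so $r$ is supported on $J$, and for $w\in J$ one has $r(w)\le 1+\operatorname{dist}(w,\partial J)$, the distance being to the nearer of the two endpoints of $J$. Writing $A$, its complement, and $J=[2p+2-2s_1,\,4p+2s_2]$ as explicit arcs of $\Z_{3p}$, a direct check — using precisely the hypothesis $2s\le p$ — shows that $\Z_{3p}\setminus J\subseteq A$; hence $A\cap J$ consists of exactly $3s-1$ elements, forming two short runs abutting the two endpoints of $A$, and each of them lies within distance $2s$ of an endpoint of $J$. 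Therefore $r(w)\le 2s+1\le 3s$ for $w\in A\cap J$ (as $s\ge1$), and
\[
\ST(A)=\sum_{w\in A\cap J}r(w)\le(3s-1)\cdot 3s<9s^2\le 21s^2.
\]
(Summing the two runs exactly gives $\ST(A)=\binom{2s_1+s_2}{2}+\binom{s_1+2s_2+1}{2}\le 4s^2$, but the crude bound already suffices.)

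The content of the proof is entirely in the second case, and the only delicate point is the interval bookkeeping: establishing $\Z_{3p}\setminus J\subseteq A$ (which is exactly where $2s\le p$ is needed), deducing $|A\cap J|=3s-1$, and verifying that all these ``returning'' sums sit within $2s$ of the boundary of the sumset $J$ so that each has at most $2s+1$ representations. All of this is mechanical once $A$, its complement and $J$ are written out as arcs of $\Z_{3p}$ and one tracks their endpoints.
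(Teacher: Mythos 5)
Your proof is correct, but it takes a genuinely different route from the paper's. The paper stays with the product structure: it rounds $a$ up to $b=3\lceil a/3\rceil$, takes $B=\Z_3\times\{x_1,\ldots,x_{b/3}\}$ with the $x_i$ ordered as in Theorem~\ref{thm:Zp}, uses multiplicativity of the Schur-triple count over the direct product to get $\ST(B)=9\lfloor\frac{b-p}{2}\rfloor\lceil\frac{b-p}{2}\rceil\le\frac{9}{4}(b-p)^2\le\frac{81}{4}(a-p)^2$, and then passes to an $a$-element subset of $B$; this is a two-line deduction from the exact extremal count in $\Z_p$ already established via Pollard's theorem, with no case analysis. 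You instead handle $p=3$ by hand, identify $\Z_3\times\Z_p$ with $\Z_{3p}$ by CRT for $p\neq 3$, and grow the middle-third sum-free interval symmetrically, bounding the representation function $r$ directly: when $2s\le p$ (with $s=a-p$) the sumset $J$ is a short arc whose complement sits inside $A$, so only the $3s-1$ elements of $A$ near its two ends receive sums, each at most $2s+1$ times, while for $2s>p$ the trivial bound $r(w)\ge 2|A|-|G|$ for every $w$ already gives $\ST(A)\le 3(p^2-ps+s^2)<21s^2$. I checked the arc bookkeeping ($\Z_{3p}\setminus J\subseteq A$, $|A\cap J|=3s-1$, and the distance bounds, where ``distance'' must be read along $J$ in the lift to $\Z$ rather than cyclically — which is what your run-by-run accounting in fact does), as well as your exact count $\binom{2s_1+s_2}{2}+\binom{s_1+2s_2+1}{2}$, and it all holds. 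What each approach buys: yours is self-contained (no appeal to Theorem~\ref{thm:Zp}) and gives a better constant, at the cost of the CRT reduction, the $p=3$ special case, and the endpoint bookkeeping; the paper's is shorter precisely because it reuses the exact $\Z_p$ result and needs no cases.
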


Last but not least, the argument we use in our proof of Theorem~\ref{thm:type-I} can be adapted to show that every sufficiently large set $A$ of elements of an arbitrary finite abelian group that is nearly sum-free must necessarily contain a genuinely sum-free set $B$ such that $|A \setminus B|$ is very small.

\begin{prop}
  \label{prop:Green-Ruzsa-optimal}
  Suppose that $\eps > 0$ and let $G$ be a finite abelian group. If some $A \subseteq G$ has at least $(\frac{1}{3}+\eps)|G|$ elements and $\ST(A) \le \eps^2|G|^2/2$, then $A$ contains a sum-free set $B$ with $|A \setminus B| \le \eps|G|$.
\end{prop}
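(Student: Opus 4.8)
The plan is to prove the equivalent assertion that $A$ is contained, up to at most $\eps|G|$ elements, in some sum-free subset $A^\ast$ of $G$ — one then takes $B=A\cap A^\ast$, which is sum-free with $|A\setminus B|=|A\setminus A^\ast|\le\eps|G|$. Assume for contradiction that this fails, i.e.\ that one must delete more than $\eps|G|$ elements from $A$ to destroy all Schur triples; write $d>\eps|G|$ for the smallest such number. We shall derive $\ST(A)>\tfrac12\eps^2|G|^2$, contradicting the hypothesis. The argument has the same two parts as the proof of Theorem~\ref{thm:type-I}: a stability step, followed by a fibrewise count of Schur triples.

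\emph{Stability.} One shows that $A$ is close to a \emph{canonical} sum-free set, that is, to a set $A^\ast=\varphi^{-1}(M)$ for some surjective homomorphism $\varphi\colon G\to Q$ onto a quotient $Q$ and some ``interval-like'' sum-free $M\subseteq Q$ (the middle-third interval when $Q$ is cyclic; these are precisely the sets appearing in the Green--Ruzsa classification of maximum sum-free sets). In Theorem~\ref{thm:type-I} the quotient $Q$ is prescribed, but for a general $G$ it must be produced: after a negligible amount of cleaning (deleting the $O(\eps^2|G|)$ elements $z\in A$ for which $r_A(z):=|\{(x,y)\in A^2:x+y=z\}|$ exceeds, say, $|G|/2$), one applies Kneser's theorem — in a form robust to a bounded number of exceptional additive triples — to find a nontrivial subgroup $H\le G$ such that $A$ is, up to few elements, a union of cosets of $H$ realising a sum-free pattern in $G/H$; iterating inside the single ``boundary'' coset reduces matters to a group of prime order, where Theorem~\ref{thm:Zp} (Pollard's inequality and its stability counterpart) shows the set is close to a dilate of an interval. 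Reassembling the reduction yields the canonical $A^\ast$, with $A$ differing from it only on and near $M$.

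\emph{Counting.} Writing $a_c=|A\cap\varphi^{-1}(c)|$ for $c\in Q$ and $g=|H|$, one has
\[
\ST(A)\ \ge\ \sum_{c,c'\in Q}a_c\,\max\bigl(0,\ a_{c'}+a_{c+c'}-g\bigr),
\]
because for each of the $a_c$ elements $x\in A\cap\varphi^{-1}(c)$ the number of $y\in A\cap\varphi^{-1}(c')$ with $x+y\in A$ equals $\bigl|\bigl(A\cap\varphi^{-1}(c')\bigr)\cap\bigl(\bigl(A\cap\varphi^{-1}(c+c')\bigr)-x\bigr)\bigr|\ge a_{c'}+a_{c+c'}-g$ (both sets lie inside the coset $\varphi^{-1}(c')$), and the triples $(x,y,x+y)$ so obtained are pairwise distinct. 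By the stability step $A$ differs from $A^\ast$ only on and near $M$; since $|A\setminus A^\ast|\ge d>\eps|G|$, there are more than $\eps|G|$ surplus elements, sitting in cosets $\varphi^{-1}(c')$ with $c'$ on or just outside $M$. If $c'$ is adjacent to $M$ then $2c'\in M$ (unless $Q=\Z_2$), so $\varphi^{-1}(2c')$ is a full coset mostly contained in $A$, and the $\gtrsim d^2$ ordered pairs of surplus elements lying in such a coset essentially all give Schur triples; surplus placed farther from $M$, or on an originally full coset, yields $\gtrsim g\,d$ triples instead; and $Q=\Z_2$ is exactly the $p=2$ case of Theorem~\ref{thm:type-I}, where $\gtrsim g\,d$ already suffices since $g=|G|/2$. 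Carrying out the optimisation exactly as in the proof of Theorem~\ref{thm:type-I} yields $\ST(A)\ge\tfrac12 d^2>\tfrac12\eps^2|G|^2$, the desired contradiction. (One may assume $\eps$ is smaller than an absolute constant; otherwise $d$ is so large that even crude estimates already give $\ST(A)>\eps^2|G|^2/2$.)

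The step I expect to be the main obstacle is the stability statement: upgrading ``$A$ is large and has few Schur triples'' to ``$A$ differs from a canonical sum-free set in at most $\eps|G|$ — rather than merely $o(|G|)$ — elements'', uniformly over all finite abelian groups. This forces one to run the Kneser-and-recursion reduction while bounding the error introduced at each stage by an explicit small fraction of $\eps|G|$, to match it against the Schur-triple lower bounds above, to use a version of Kneser's theorem tolerant of a few exceptional triples, and to keep track of the distinct behaviours according to the type of $G$ (with the customary extra care when the relevant prime is~$2$); this bookkeeping is where the bulk of the work lies.
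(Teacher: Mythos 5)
Your approach has a genuine gap, and it is exactly the step you flag yourself: the ``stability'' claim that any $A$ with $|A|\ge(\frac13+\eps)|G|$ and $\ST(A)\le\eps^2|G|^2/2$ lies within $O(\eps)|G|$ of a canonical set $\varphi^{-1}(M)$ is never proved, only sketched (``Kneser in a form robust to exceptional triples'', ``iterating inside the boundary coset'', ``reassembling''), and it is substantially \emph{stronger} than the proposition itself: uniformly over all finite abelian groups and with constants explicit in $\eps$, it would amount to a sharpened Green--Ruzsa structure theorem. Worse, in the form you state it, it cannot be run in general: for groups of type~\tIII\ there is no sum-free set of size at least $|G|/3$ at all, so no canonical $A^\ast$ with $|A\setminus A^\ast|\le\eps|G|$ exists, and your argument would instead have to show that the hypotheses are vacuous for such groups --- a case your sketch does not address. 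The counting half is likewise only asserted: the bound $\ST(A)\ge\frac12 d^2$ is not derived, and ``the optimisation exactly as in the proof of Theorem~\ref{thm:type-I}'' does not transfer, since that proof works in a fixed quotient $\Z_m$ with $m$ close to a prescribed prime $p$ and with $|A|$ confined to a short range above the extremal size.

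The missing idea is that no structure theory is needed at all; the paper proves the proposition in a few lines by a removal argument based on popular sums and popular differences. Writing $n=|G|$, let $C=\{x\in A\colon |(x-A)\cap A|\ge\eps n\}$; each $z\in C$ is the sum in at least $\eps n$ Schur triples, so $\ST(A)\ge|C|\eps n$ forces $|C|\le\eps n/2$. Put $A'=A\setminus C$. For any $x,y\in A'$ one has $|(x-A)\cap(y-A)|\ge|(x-A)\cap(y-A)\cap A^c|\ge 2(|A|-\eps n)-|A^c|=3|A|-(1+2\eps)n\ge\eps n$, so every element of $A'-A'$ has at least $\eps n$ representations as a difference of two elements of $A$; hence $\ST(A)\ge|(A'-A')\cap A|\cdot\eps n$, giving $|(A'-A')\cap A'|\le\eps n/2$. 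The set $B=A'\setminus(A'-A')$ is then sum-free (a relation $x+y=z$ with $x,y,z\in B$ would put $x=z-y\in A'-A'$) and $|A\setminus B|\le|C|+|(A'-A')\cap A'|\le\eps n$. Note also that the conclusion only asks for \emph{some} large sum-free subset of $A$, not proximity to a union-of-cosets set, which is why this soft argument suffices and why your heavy structural route is both unnecessary and, as it stands, incomplete.
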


A slightly weaker version of this useful fact was first proved by Green and Ruzsa~\cite{GrRu05} (their statement has a stronger requirement on the number of Schur triples). Observe that our assumption on $\ST(A)$ in the above proposition is optimal up to an absolute multiplicative constant. Indeed, Proposition~\ref{prop:Z3Zp} implies that for every $\eps \in (0,\frac{2}{3})$, there are a group $G$ with no sum-free set larger than $|G|/3$ and a set $A$ with $\ST(A) \le 22 \eps^2 |G|^2$ and more than $(\frac{1}{3} + \eps)|G|$ elements (and hence no sum-free subset $B$ with $|A \setminus B| \le \eps |G|$).

\section{Cyclic groups of prime order}

\label{sec:Zp}

In this section, we prove Theorem~\ref{thm:Zp}. The first part of the theorem, a lower bound on the number of Schur triples in an arbitrary set of $a$ elements of $\Z_p$ is a fairly straightforward consequence of the following result of Pollard~\cite{Po74}, which generalises the well-known theorem of Cauchy~\cite{Ca13} and Davenport~\cite{Da35}.

\begin{thm}
  \label{thm:Pollard}
  Let $p$ be a prime and let $A, B \subseteq \Z_p$ . For an integer $r$, denote by $N_r$ the number of elements of $\Z_p$ which are expressible in at least $r$ ways as $x+y$ with $x \in A$ and $y \in B$. Then for every $r$ with $1 \le r \le \min\{|A|,|B|\}$,
  \begin{equation}
    \label{eq:Pollard}
    N_1 + \ldots + N_r \ge r \cdot \min\{p, |A|+|B|-r\}.
  \end{equation}
\end{thm}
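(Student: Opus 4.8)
The plan is to recast the partial sum $N_1+\dots+N_r$ as a truncated count of representations and then feed the Cauchy--Davenport/Kneser machinery into it. Write $s(z)=|\{(x,y)\in A\times B : x+y=z\}|$ for the representation function, so that $N_i=|\{z\in\Z_p : s(z)\ge i\}|$. Swapping the order of summation in $\sum_{i=1}^r N_i=\sum_{i=1}^r\sum_z \1[s(z)\ge i]$ gives the identity
\[
N_1+\dots+N_r=\sum_{z\in\Z_p}\min\{s(z),r\}=|A|\,|B|-\sum_{z\in\Z_p}\bigl(s(z)-r\bigr)^+,
\]
where $x^+=\max\{x,0\}$. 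Thus Theorem~\ref{thm:Pollard} is equivalent to lower-bounding the truncated sum $\sum_z\min\{s(z),r\}$, and throughout we may assume $1\le r\le\min\{|A|,|B|\}$ as in the statement.

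First I would dispose of the \emph{saturated} regime $|A|+|B|\ge p+r$, in which the right-hand side of \eqref{eq:Pollard} equals $rp$. Here one only needs the trivial pointwise estimate $s(z)=|A\cap(z-B)|\ge |A|+|z-B|-p=|A|+|B|-p$, which holds for every $z$ because $A$ and $z-B$ both sit inside $\Z_p$. When $|A|+|B|-p\ge r$ this forces $s(z)\ge r$ for all $z$, hence $N_i=p$ for every $i\le r$ and $\sum_{i=1}^r N_i=rp$, with equality. It therefore remains to treat the \emph{unsaturated} regime $|A|+|B|-r\le p$, where $\min\{p,|A|+|B|-r\}=|A|+|B|-r$ and the target reduces, via the identity above, to the clean excess inequality
\[
\sum_{z\in\Z_p}\bigl(s(z)-r\bigr)^+\ \le\ (|A|-r)(|B|-r).
\]
One checks that both sides equal $(m-r)^2$ when $A=B$ is an interval of length $m$, so this bound is tight and singles out arithmetic progressions as the extremal configurations.

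The heart of the matter is this excess inequality, and here I expect the main obstacle. The tempting route---to bound each level separately by $N_i\ge |A|+|B|-2i+1$ and sum---\emph{fails}: already $A=\{0,1\}$, $B=\{0,2,4\}$ has $N_2=0$, far below $|A|+|B|-3=2$. What rescues the partial sum is that such a set has an unusually \emph{large} first level $N_1=|A+B|$, so that the surplus of low-multiplicity points compensates for the scarcity of high-multiplicity ones. Any correct argument must therefore control the truncated sum \emph{jointly} rather than level by level, and this compensation is exactly the feature that makes Pollard's theorem strictly stronger than Cauchy--Davenport. The structural input I would use to supply the joint control is that in $\Z_p$ the only subgroups are $\{0\}$ and $\Z_p$, so by Kneser's theorem a sumset $A+B\neq\Z_p$ has trivial stabilizer; the absence of nontrivial stabilizers is precisely what forbids the degenerate coset structure that would otherwise let high multiplicities pile up on a short sumset.

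Concretely, I would argue by induction on $\min\{|A|,|B|\}$. The base case $\min\{|A|,|B|\}=r$ is immediate: then $s(z)\le r$ everywhere, so $\min\{s(z),r\}=s(z)$ and $\sum_z\min\{s(z),r\}=\sum_z s(z)=|A|\,|B|=r\cdot\max\{|A|,|B|\}=r(|A|+|B|-r)$, with equality. For the inductive step I would apply the Dyson $e$-transform $(A,B)\mapsto(A\cup(B+g),\,B\cap(A-g))$, which preserves $|A|+|B|$, shrinks the support $A+B$, and, for a suitable $g$, lowers $\min\{|A|,|B|\}$ by one; iterating brings us down to the base case $\min\{|A|,|B|\}=r$ (achievable precisely because $\max\{|A|,|B|\}=|A|+|B|-r\le p$ in the unsaturated regime). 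The delicate point---and the step I expect to be genuinely hard---is that, unlike the plain support $|A+B|$, the truncated sum $\sum_z\min\{s(z),r\}$ does \emph{not} transform monotonically for an arbitrary choice of $g$ (a careless choice can even push it below the target), so $g$ must be chosen, and its effect on the truncated count tracked, using the structural information Kneser's theorem provides about how the representations redistribute. Establishing that the truncated sum is non-increasing under such a carefully chosen transform is the crux that upgrades Cauchy--Davenport to the full statement of Theorem~\ref{thm:Pollard}.
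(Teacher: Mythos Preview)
The paper does not prove Theorem~\ref{thm:Pollard}; it is quoted from Pollard~\cite{Po74} and used as a black box in the proof of Theorem~\ref{thm:Zp}. So there is no in-paper argument to compare against. Your outline is close in spirit to Pollard's original proof, but your diagnosis of the crux is off in two places.

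First, you assert that $\sum_z \min\{s(z),r\}$ fails to be monotone under the $e$-transform $(A,B)\mapsto (A',B')=(A\cup(B+g),\,B\cap(A-g))$ for arbitrary $g$. In fact $s_{A',B'}(z)\le s_{A,B}(z)$ holds \emph{pointwise}, for every $z$ and every $g$. The injection from representations in $A'\times B'$ to representations in $A\times B$ is explicit: if $(a',b')$ has $a'\in A$, keep it; if $a'\in A'\setminus A$, then $a'-g\in B\setminus B'$ and $b'+g\in A$, so send it to $(b'+g,\,a'-g)$, which has the same sum. The two cases land in $A\times B'$ and in $\big((B+g)\cap A\big)\times (B\setminus B')$ respectively, so the map is injective. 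Monotonicity of the truncated sum is therefore automatic, and Kneser's theorem is not needed.

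Second, the genuine obstacle is not a failure of monotonicity but the possibility that the transform drops $|B'|$ below $r$; then the inductive hypothesis (which needs $r\le\min\{|A'|,|B'|\}$) no longer applies, and indeed $\sum_z\min\{s'(z),r\}=|A'|\,|B'|$ can fall short of $r(|A|+|B|-r)$. Pollard's remedy is not a careful choice of $g$ but to track the lost representations. The complement of the injection above is exactly $(A\setminus(B+g))\times(B\setminus B')$, so
\[
s_{A,B}(z)=s_{A',B'}(z)+s_{A'',B''}(z),\qquad A''=A\setminus(B+g),\quad B''=B\setminus B'.
\]
Combining this with the elementary inequality $\min\{a+b,r\}\ge\min\{a,r'\}+\min\{b,r-r'\}$, taking $r'=\min\{r,|B'|\}$, and applying the inductive hypothesis to \emph{both} $(A',B')$ and $(A'',B'')$---each with a strictly smaller second set than $B$---yields exactly $r(|A|+|B|-r)$ after a two-line computation. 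Primality of $p$ enters only to guarantee that a nontrivial transform (one with $0<|B'|<|B|$) exists whenever $|B|\ge 2$ and $A\neq\Z_p$.
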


The second part of Theorem~\ref{thm:Zp} will be derived from the following stability counterpart of Pollard's result due to Nazarewicz, O'Brien, O'Neill, and Staples~\cite{NaOBrONeSt07}.

\begin{thm}
  \label{thm:Pollard-stability}
  Let $p$ be a prime, let $A, B \subseteq \Z_p$, and let $r$ be an integer with $1 \le r \le \min\{|A|,|B|\}$. Then equality holds in~\eqref{eq:Pollard} of Theorem~\ref{thm:Pollard} if and only if at least one of the following conditions holds:
  \begin{enumerate}
    \renewcommand{\theenumi}{\textit{(\roman{enumi})}}
  \item
    \label{item:Pollard-stability-i}
    $\min\{|A|, |B|\} = r$,
  \item
    \label{item:Pollard-stability-ii}
    $|A| + |B| \ge p + r$,
  \item
    \label{item:Pollard-stability-iii}
    $|A| = |B| = r + 1$ and $B = x - A$ for some $x \in \Z_p$, or
  \item
    \label{item:Pollard-stability-iv}
    $A$ and $B$ are arithmetic progressions with the same common difference.
  \end{enumerate}
\end{thm}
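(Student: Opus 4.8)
The plan is to reconstruct the argument of Nazarewicz, O'Brien, O'Neill, and Staples. For $z \in \Z_p$ write $\rho(z) = |\{(x,y) \in A \times B : x + y = z\}|$, so that $N_i = |\{z : \rho(z) \ge i\}|$ and $N_1 + \cdots + N_r = \sum_z \min\{\rho(z), r\} = |A||B| - \sum_z(\rho(z)-r)^+$. Assuming $|A| + |B| - r \le p$ (the complementary range being exactly condition (ii), which is handled separately), equality in~\eqref{eq:Pollard} is therefore equivalent to $\sum_z(\rho(z)-r)^+ = (|A|-r)(|B|-r)$. The ``if'' direction is then a short verification of this identity case by case: under (i), $\rho \le \min\{|A|,|B|\} = r$ everywhere, so both sides vanish; under (ii), $\rho(z) \ge |A|+|B|-p \ge r$ everywhere, so $N_1 + \cdots + N_r = rp$, which matches; under (iii), writing $B = x-A$ gives $\rho(x) = |A|$ and $\rho(z) = |A \cap ((z-x)+A)| \le |A|-1 = r$ for $z \ne x$ (since $\Stab(A) = \{0\}$ in $\Z_p$), whence each side equals $1$; and under (iv), an automorphism normalising the common difference to $1$ turns $\rho$ into the trapezoidal function supported on the interval $A+B$, for which $\sum_z(\rho(z)-r)^+ = (|A|-r)(|B|-r)$ is a direct computation.

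For the ``only if'' direction, suppose~\eqref{eq:Pollard} is tight and that neither (i) nor (ii) holds, so $\min\{|A|,|B|\} \ge r+1$ and $|A|+|B| \le p+r-1$; the goal is to derive (iii) or (iv). I would follow the proof of Pollard's theorem, which is ultimately a transform-based consequence of the Cauchy--Davenport inequality, and track equality along the way. A clean anchor is the complement identity
\[
N_1(A,B) + \cdots + N_r(A,B) = rp - (p-|A|)|B| + N_1(\Z_p\setminus A,B) + \cdots + N_{|B|-r}(\Z_p\setminus A,B),
\]
valid for $|B| \le |A|$ and $1 \le r \le |B|$, which follows from $\rho_{A,B}(z) + \rho_{\Z_p\setminus A,B}(z) \equiv |B|$; it preserves equality and sends the parameter $r$ to $|B|-r$. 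When $\min\{|A|,|B|\} = |B| = r+1$ it converts tightness of~\eqref{eq:Pollard} into equality in the Cauchy--Davenport bound $|(\Z_p\setminus A)+B| = |\Z_p\setminus A|+|B|-1$, at which point Vosper's theorem — the stability of Cauchy--Davenport, together with its degenerate ``saturated'' case $|(\Z_p\setminus A)+B| = p-1$ — gives: either $\Z_p\setminus A$ and $B$, hence $A$ and $B$, are arithmetic progressions with a common difference (case (iv)); or $|\Z_p\setminus A| \le 1$ or $|B| \le 1$ (excluded, as it forces (ii)); or $|(\Z_p\setminus A)+B| = p-1$, forcing $|\Z_p\setminus A|+|B| = p$, hence $|A| = |B| = r+1$ and $B = x-A$ for some $x$ (case (iii)). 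When instead $\min\{|A|,|B|\} \ge r+2$, I would peel off one element from the larger set (equivalently run one step of the $e$-transform), check that tightness of~\eqref{eq:Pollard} survives, observe that the reduced pair satisfies neither (i) nor (ii) since its sizes drop by only one, and apply the inductive hypothesis on $|A|+|B|$; because case (iii) demands equal cardinalities $r+1$, it cannot apply to the reduced pair, so that pair consists of arithmetic progressions with a common difference, and a final short argument upgrades this to $A$ and $B$ themselves, yielding (iv).

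The main obstacle is the bookkeeping of the ``only if'' direction. One must verify that each reduction inherits the equality hypothesis — in the peeling step this requires extracting from tightness at level $r$ enough control of the level-$(r+1)$ count $N_{r+1}$ to guarantee that some deletion stays tight — and that the structural conclusions for the reduced pairs (an arithmetic progression, a reflected pair of size $r+1$, or a pair governed by (i)/(ii)) pull back to exactly one of (i)--(iv) for $(A,B)$. The sharpest difficulties sit at the boundary: when $\min\{|A|,|B|\}$ is close to $r$, and when $A+B$ nearly exhausts $\Z_p$, so that the usual hypothesis $|A+B| \le p-2$ of Vosper's theorem fails. These are precisely the regimes responsible for cases (i), (ii), and (iii), and correctly handling the $|A+B| = p-1$ exception — showing it forces (iii) and nothing more — is, I expect, the crux of the whole argument.
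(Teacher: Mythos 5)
First, a point of comparison: the paper does not prove Theorem~\ref{thm:Pollard-stability} at all --- it is quoted from Nazarewicz, O'Brien, O'Neill, and Staples~\cite{NaOBrONeSt07} and used as a black box --- so your reconstruction has to stand on its own. Much of it does. The reformulation of equality in~\eqref{eq:Pollard} as $\sum_z(\rho(z)-r)^+=(|A|-r)(|B|-r)$ is correct, the four verifications in the ``if'' direction are routine (in case \textit{(iv)} you should add a line for the wrap-around regime $|A|+|B|-1>p$, where $\rho$ is not literally a trapezoid), and the complementation identity combined with Vosper's theorem genuinely settles the ``only if'' direction when $\min\{|A|,|B|\}=r+1$, including the correct attribution of the degenerate case $|(\Z_p\setminus A)+B|=p-1$ to condition \textit{(iii)}.

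The gap is the inductive step when $\min\{|A|,|B|\}\ge r+2$, and it is not merely bookkeeping. Say $|A|\ge|B|$ and set $d(a)=|\{z\in a+B:\rho(z)\le r\}|$; deleting $a$ changes $\sum_z\min\{\rho(z),r\}$ by exactly $d(a)$, Pollard applied to $(A\setminus\{a\},B)$ forces $d(a)\le r$, and the deletion stays tight if and only if $d(a)=r$. Since $\sum_{a\in A}d(a)=\sum_{z:\,\rho(z)\le r}\rho(z)=r\bigl(|A|+|B|-r-N_{r+1}\bigr)$, the averaging argument you allude to produces a good $a$ only if one can rule out $N_{r+1}\ge |B|-r+|A|/r$; but the bounds extractable from tightness at level $r$ (e.g.\ $N_{r+1}\le N_r\le |A|+|B|-r$) do not rule this out for any $r\ge 2$, and you offer no substitute. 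This is precisely the ``control of $N_{r+1}$'' you flag as the main obstacle --- flagging it does not discharge it. Two further steps are also asserted rather than proved: deleting one element is \emph{not} equivalent to a step of the Dyson $e$-transform (which replaces $(A,B)$ by $(A\cup(g+B),\,B\cap(A-g))$ and alters both sets), and even granting a tight deletion, upgrading ``$A\setminus\{a\}$ and $B$ are arithmetic progressions with common difference $d$'' to the same statement for $A$ itself needs an argument (one tight deletion only constrains $A$ up to one stray element). These are exactly the places where the published proof does several pages of work, so as written the ``only if'' direction is incomplete.
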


\begin{proof}[{Proof of Theorem~\textup{\ref{thm:Zp}}}.]
  Fix an arbitrary set $A$ of $a$ elements of $\Z_p$. Given an $r \ge 1$, let $S_r$ denote the set of all elements in $\Z_p$ that are expressible in at least $r$~ways as $x+y$ with $x, y \in A$. Recall that $N_r=|S_r|$. Moreover, let $N_r' = |S_r \cap A|$. Clearly, $N_r' \ge N_r + a - p$ and hence by Theorem~\ref{thm:Pollard}, for any $R \ge 0$,
  \begin{equation}
    \label{eq:STA}
    \ST(A) = \sum_{r \ge 1} N_r' \ge \sum_{r = 1}^R N_r' \ge \sum_{r=1}^R N_r + R(a-p) \ge R \cdot \big( \min\{p, 2a-R\} + a - p \big).
  \end{equation}
  Let $R = \max\big\{0, \lceil \frac{3a-p}{2} \rceil\big\}$. Note that $a \le p$ implies that $2a \le p + \frac{3a-p}{2}$ and consequently
  \[
  2a \le p + \left\lceil \frac{3a-p}{2} \right\rceil \le p + R.
  \]
  In particular, the minimum in the right-hand side of~\eqref{eq:STA} is equal to $2a-R$ and therefore,
  \begin{equation}
    \label{eq:STA-final}
      \ST(A) \ge R(3a-R-p) = \max\left\{0, \left\lceil \frac{3a-p}{2} \right\rceil\right\} \cdot \min\left\{ 3a-p, \left\lfloor \frac{3a-p}{2} \right\rfloor \right\}.
  \end{equation}
  It is straightforward to check that the right-hand side of~\eqref{eq:STA-final} is equal to the right-hand side of~\eqref{eq:STA-Zp}. In order to complete the proof of~\eqref{eq:STA-Zp}, we still need to verify the equality there. Assume first that $a$ is even. Then $\{x_1, \ldots, x_a\} = \big\{ \frac{p+1-a}{2}, \ldots, \frac{p+a-1}{2}\big\}$. A Schur triple $(x,y,z)$ in $\{x_1, \ldots, x_a\}$ may be of one of the following two types: either $x+y=z$ or $x+y = z+p$, where the equalities hold in $\Z$. Let $R' = \max\big\{0, \frac{3a-p-1}{2}\big\} = \max\big\{0, \lfloor\frac{3a-p}{2}\rfloor\big\}$. It is easy to check that $\frac{p+1-a}{2}$ plays the role of $x$ in exactly $R'$ triples of the first type, as $y$ ranges over the $R'$ smallest elements of the interval $\big\{ \frac{p+1-a}{2}, \ldots, \frac{p+a-1}{2}\big\}$. More generally, the element $\frac{p+1-a}{2} + i$ plays the role of $x$ in exactly $R'-i$ triples of the first type. By symmetry, $\frac{p+a-1}{2}$ plays the role of $x$ in exactly $R'$ triples of the second type, as $y$ ranges over the $R'$ largest elements of the interval $\big\{ \frac{p+1-a}{2}, \ldots, \frac{p+a-1}{2}\big\}$, and more generally, $\frac{p+a-1}{2} - i$ plays the role of $x$ in exactly $R'-i$ triples of the second type. It follows that
  \begin{equation}
    \label{eq:STxi-a-even}
    \begin{split}
      \ST(\{x_1, \ldots, x_a\}) & = 2 \cdot \sum_{r = 1}^{R'} r = 2 \cdot \binom{R'+1}{2} = R'(R'+1) \\
      & = \max\left\{0, \left\lfloor \frac{3a-p}{2} \right\rfloor\right\} \cdot \max\left\{1, \left\lceil\frac{3a-p}{2}\right\rceil\right\}.
    \end{split}
  \end{equation}
  It is straightforward to verify that the right-hand side of~\eqref{eq:STxi-a-even} is equal to the right-hand side of~\eqref{eq:STA-Zp}. When $a$ is odd, then $\{x_1, \ldots, x_a\} = \big\{ \frac{p-a}{2}, \ldots, \frac{p+a-2}{2}\big\}$ and, letting $R'' = \max\big\{0, \frac{3a-p}{2}\big\} = \max\big\{0, \lfloor \frac{3a-p}{2} \rfloor\big\} = \max\big\{0, \lceil\frac{3a-p}{2}\rceil\big\}$, analogous considerations yield
  \begin{equation}
    \label{eq:STxi-a-odd}
    \begin{split}
      \ST(\{x_1, \ldots, x_a\}) & = \binom{R''+1}{2} + \binom{R''}{2} = (R'')^2 \\
      & = \max\left\{0, \left\lceil \frac{3a-p}{2} \right\rceil\right\} \cdot \max\left\{0, \left\lfloor \frac{3a-p}{2} \right\rfloor \right\}.      
    \end{split}
  \end{equation}
  It is easy to check that the right-hand side of~\eqref{eq:STxi-a-odd} is equal to the right-hand side of~\eqref{eq:STA-Zp}.

  We now characterise all sets $A$ with $a$ elements that achieve the lower bound in~\eqref{eq:STA-Zp} whenever the right-hand side of~\eqref{eq:STA-Zp} is nonzero, that is, when $a > \lfloor \frac{p+1}{3} \rfloor$. To this end, let us analyse when all inequalities in~\eqref{eq:STA} hold with equality. In particular, equality must hold in~\eqref{eq:Pollard} of Theorem~\ref{thm:Pollard} invoked with $A \leftarrow A$, $B \leftarrow A$, and $r \leftarrow R$, where $R = \max\big\{0, \lceil \frac{3a-p}{2} \rceil\big\} \ge 1$ and the inequality follows from our assumption that $a > \lfloor \frac{p+1}{3} \rfloor$. Theorem~\ref{thm:Pollard-stability} tells us that~\eqref{eq:STA} can hold with equality only if one of the following conditions is satisfied:
  \begin{enumerate}
    \renewcommand{\theenumi}{(\roman{enumi})}
  \item
    \label{item:Zp-stab-1}
    $a = R$,
  \item
    $2a \ge p + R$,
  \item
    $a = R + 1$ and $A = x - A$ for some $x \in \Z_p$,
  \item
    \label{item:Zp-stab-4}
    $A$ is an arithmetic progression.
  \end{enumerate}
  Now observe that
  \begin{align*}
    a = R=\lceil a-\frac{p-a}{2} \rceil & \Longleftrightarrow \left\lfloor \frac{p-a}{2} \right\rfloor = 0 \Longleftrightarrow a \ge p-1, \\
    2a \ge p+R & \Longleftrightarrow a \ge \left\lceil \frac{a+p}{2} \right\rceil \Longleftrightarrow a \ge p, \\
    a = R+1 & \Longleftrightarrow \left\lfloor \frac{p-a}{2} \right\rfloor = 1 \Longleftrightarrow a \in \{p-3, p-2\},
  \end{align*}
  and every set $A \subseteq \Z_p$ that satisfies either $|A| \ge p-2$ or $|A| = p-3$ and $A = x - A$ for some $x \in \Z_p$ is an arithmetic progression (to see this, note that $A^c$ is an arithmetic progression), we deduce that each of \ref{item:Zp-stab-1}--\ref{item:Zp-stab-4} implies that $A$ must be an arithmetic progression with common difference $d$. We may assume that $d = 1$ as otherwise we may replace $A$ by its automorphic image $d^{-1} \cdot A$. Hence, $A = \{x, \ldots, x+a-1\}$ for some $x \in \Z_p$.

  In order for~\eqref{eq:STA} to hold with equality, it must also be that
  \[
  |A \cap S_R| = N_R' = N_R + a - p = |S_R| - |A^c|,
  \]
  that is, $A^c \subseteq S_R$. One easily checks that $S_R = \{2x+R-1, \ldots, 2x +2a-R-1\}$ and hence $|S_R| = 2(a-R)+1 = 2\lfloor \frac{p-a}{2} \rfloor + 1$. If $a$ is even, then $|A^c| = p-a = |S_R|$ and hence $x + a = 2x+R-1$, which yields $x = \frac{p-a+1}{2}$, that is,
\[
A = \left\{ \frac{p+1}{2} - \frac{a}{2}, \ldots, \frac{p-1}{2} + \frac{a}{2} \right\} = \{x_1, \ldots, x_a\}.
\]
If $a$ is odd, then $|A^c| = |S_R| - 1$ and hence either $x+a = 2x+R-1$ or $x+a = 2x+R$, yielding $x = \frac{p-a}{2}$ or $x = \frac{p-a}{2} + 1$, that is,
\[
A = \pm\left\{ \frac{p-a}{2} + 1, \ldots, \frac{p+a}{2} \right\} = \pm\{x_1, \ldots, x_a\}.\qedhere
\]
\end{proof}

\section{Groups of type~\tI}

\label{sec:type-I}

In this section, we prove Theorem~\ref{thm:type-I}. Our argument uses some ideas from~\cite{GrRu05,LeLuSc01}. Actually, one may adapt the arguments of these two papers to establish Theorem~\ref{thm:type-I} under the stronger assumption that $t \le \delta n/p^4$ for some positive constant $\delta$. Our main tool will be the following classical result of Kneser~\cite{Kn53,Kn55}. The version stated below is~\cite[Theorem~3.1]{Ke60}. Recall that the \emph{stabiliser} of a set $A$ of elements of an abelian group $G$, denoted by $\Stab(A)$, is defined by
\[
\Stab(A) = \{x \in G \colon A + x = A\}.
\]

\begin{thm}
  \label{thm:Kneser}
  Let $A$ and $B$ be finite non-empty subsets of an Abelian group $G$ satisfying $|A+B| \le |A| + |B| - 1$. Then $H = \Stab(A+B)$ satisfies
  \[
  |A+B| = |A+H| + |B+H| - |H|.
  \]
\end{thm}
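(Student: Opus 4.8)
The plan is to separate the real content of the statement---Kneser's inequality
\[
|A+B| \ge |A+H| + |B+H| - |H|, \qquad H = \Stab(A+B),
\]
which holds for all finite nonempty $A,B$ with no restriction whatsoever on the size of $A+B$---from the extra conclusion of equality, which I would then deduce by a short divisibility argument. Granting the inequality, here is the endgame. Since $H = \Stab(A+B)$, the sumset $A+B$ is a union of cosets of $H$, so $|A+B|$ is divisible by $|H|$; the same holds for $|A+H|$ and $|B+H|$, and hence for $m := |A+H| + |B+H| - |H|$. The inequality gives $|A+B| \ge m$, while $|A| \le |A+H|$ and $|B| \le |B+H|$ combined with the hypothesis give
\[
|A+B| \le |A| + |B| - 1 \le |A+H| + |B+H| - 1 = m + |H| - 1.
\]
Thus $|A+B|$ is a multiple of $|H|$ trapped in the interval $[m, m+|H|-1]$, and the only multiple of $|H|$ there is $m$ itself; hence $|A+B| = m$, which is precisely the asserted equality.

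It remains to prove Kneser's inequality, which I would establish by induction on $|A|$; after a translation I may assume $0 \in A$. The base case $|A| = 1$ is immediate, since then $A+B$ is a translate of $B$, $H = \Stab(B)$, and both sides equal $|B|$. For the inductive step the key device is the Dyson $e$-transform: for $e \in G$ set
\[
A_e = A \cap (B+e), \qquad B_e = B \cup (A-e).
\]
A direct check yields the two invariants $A_e + B_e \subseteq A+B$ and $|A_e| + |B_e| = |A| + |B|$, together with $A_e \subseteq A$ and $B \subseteq B_e$. For $e \in A - B$ the set $A_e$ is nonempty, and it is a \emph{proper} subset of $A$ precisely when $A \not\subseteq B+e$. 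If some $e \in A - B$ yields $\emptyset \ne A_e \subsetneq A$, then $|A_e| < |A|$, so the induction hypothesis applies to $(A_e, B_e)$ and furnishes a Kneser bound for $A_e + B_e$, and hence, via $A_e + B_e \subseteq A+B$, a lower bound for $|A+B|$.

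The crux, and the point I expect to be the main obstacle, is to convert the bound the induction hypothesis provides for the transformed pair into the desired bound for $(A,B)$, because $\Stab(A_e + B_e)$ need not equal $H$; reconciling these two stabilisers---using that $A_e + B_e \subseteq A+B$ and that $A+B$ is $H$-periodic---is exactly where Kneser's original argument invests its effort. The complementary, degenerate case is clean and deterministic: if no $e \in A - B$ shrinks $A$, then $A \subseteq B+e$ for every $e \in A-B$; fixing $a_0 \in A$ and letting $e = a_0 - b$ range over $\{a_0\}-B$ gives $(A - a_0) + B = B$, so that $A - a_0 \subseteq \Stab(B) \subseteq H$. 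Hence $A$ lies in a single coset of $H$, whence $|A+H| = |H|$ and $A+B = a_0 + (B+H)$; a one-line computation then shows $|A+B| = |B+H| = |A+H| + |B+H| - |H|$, so the inequality (indeed equality) holds outright. Combining the two cases closes the induction, and together with the divisibility argument above this establishes the theorem.
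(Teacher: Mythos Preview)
The paper does not prove this theorem; it is cited without proof as a classical result of Kneser, so there is no in-paper argument to compare your proposal against.

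As for the proposal itself: the reduction of the equality assertion to Kneser's \emph{inequality} via the divisibility trap is correct and is the standard way to pass from the inequality form to the stated form. Your base case $|A|=1$ and your handling of the degenerate case (no $e$-transform strictly shrinks $A$) are also correct. The genuine gap is precisely the one you yourself flag: in the non-degenerate inductive step, the induction hypothesis only gives you
\[
|A_e + B_e| \ge |A_e + H_e| + |B_e + H_e| - |H_e|, \qquad H_e = \Stab(A_e + B_e),
\]
and you do not explain how to convert this into $|A+B| \ge |A+H| + |B+H| - |H|$ with $H = \Stab(A+B)$. This is not a technicality but the substance of Kneser's proof. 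The difficulty is real: $A_e + B_e$ can be a proper subset of $A+B$, so $H_e$ may strictly contain $H$, and there is no monotonicity that lets you simply swap one stabiliser for the other in the bound. Closing this gap requires either a more elaborate induction scheme or a stronger, transform-stable auxiliary statement; your outline supplies neither, so as written the argument is incomplete.
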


\begin{proof}[{Proof of Theorem~\textup{\ref{thm:type-I}}}.]
  Let $\delta = 1/82$ and let $p$, $G$, and $t$ be as in the statement of the theorem. Denote the order of $G$ by $n$ and let $A$ be an arbitrary set of $(\frac{1}{3} + \frac{1}{3p})n + t$ elements of $G$. Let $n' = n/(3p)$ and define
  \[
    C_- = \big\{x \in A \colon |(x-A) \cap A| \ge n'\big\} \quad \text{and} \quad
    C_+ = \big\{x \in A \colon |(x+A) \cap A| \ge n'\big\}.
  \]
  Using the inclusion-exclusion principle (Bonferroni's inequality) to count Schur triples $(x,y,z) \in A^3$ such that $x \in C_+$, $y \in C_+$, or $z \in C_-$ yields
  \[
  \ST(A) \ge (2|C_+| + |C_-|) \cdot n' - |C_+|^2 - 2|C_-||C_+|.
  \]
  Therefore, if $|C_-| \ge 4t$ and $|C_+| \ge 4t$, then, passing to subsets of $C_-$ and $C_+$ of size exactly $4t$, we have
  \[
  ST(A) \ge 12tn' - 48t^2 = \frac{4tn}{p} - 48t^2 > \frac{3tn}{p} + t^2,
  \]
  where the last inequality follows as $49t^2 \le 49\delta tn/p < tn/p$. Hence, we may assume that either $|C_-| < 4t$ or $|C_+| < 4t$.

  Given a $* \in \{-, +\}$, let $B = A \setminus C_*$ and observe that $|(x*A) \cap A^c| \ge |A| - n'$ for each $x \in B$ and hence for every $x, y \in B$,
  \begin{multline*}
    |(x*A) \cap (y * A)| \ge |(x*A) \cap (y*A) \cap A^c| \ge 2(|A|-n') - (n-|A|) \\
    = 3|A| - n - 2n' = n/p + 3t - 2n' = n/(3p) + 3t \ge n/(3p).
  \end{multline*}
  In other words, for every pair $x,y \in B$, there are at least $n/(3p)$ pairs $a,b \in A$ such that $x-y = a - b$.

  Fix a $* \in \{-,+\}$ such that $|C_*| < 4t$ and let $B = A \setminus C_*$. Using our observation above to count Schur triples $(x,y,z) \in A^3$ such that $x \in B - B$ yields
  \begin{equation}
    \label{eq:STA-B-B-lower}
    ST(A) \ge |(B - B) \cap A| \cdot n/(3p).
  \end{equation}
  Hence, we may assume that $|(B-B) \cap A| \le 9t + 3t^2p/n \le 10t$, where the last inequality holds as $t \le \delta n / p$. In particular,
  \begin{equation}
    \label{eq:B-B-upper}
    |B-B| \le n - |A| + 10t = 2|A|-n/p+7t \le 2|B| - n/p + 15t,
  \end{equation}
  where the last inequality follows as $|B| = |A| - |C_*| \ge |A| - 4t$. Letting $H = \Stab(B-B)$, Kneser's theorem (Theorem~\ref{thm:Kneser}) implies that
  \begin{equation}
    \label{eq:B-B-lower}
    |B-B| = 2|B+H|-|H| \ge 2|B| - |H|.
  \end{equation}
  Putting~\eqref{eq:B-B-upper} and~\eqref{eq:B-B-lower} together yields
  \begin{equation}
    \label{eq:H-properties}
    |H| \ge n/p -15t \quad \text{and} \quad |B+H| - |B| \le \frac{|H|-n/p+15t}{2}.
  \end{equation}
  Let $m = |G/H|$ and observe that~\eqref{eq:H-properties} yields
  \begin{equation}
    \label{eq:m-lower}
    m \le \frac{n}{n/p - 15t} \le \frac{p}{1-15\delta}.    
  \end{equation}
  Note also that by our assumption that $|C_*| < 4t$, we have
  \begin{equation}
    \label{eq:B-lower}
    |B| > |A|-4t = n/3+n/(3p)-3t \ge n/3 + (1/3 - 3\delta) n/p > n/3.
  \end{equation}
  Now, let $B_H = (B+H) / H$. As $|B_H| > m/3$ by~\eqref{eq:B-lower}, we must have $|B_H| \ge \lceil \frac{m+1}{3} \rceil$. We shall now consider two cases.

  \medskip
  \textsc{Case 1.} $m \not\equiv 2 \pmod 3$.
  \smallskip

  In this case, $|B_H| \ge \frac{m+2}{3}$ and it follows from~\eqref{eq:B-B-lower} that $|B-B|/|H| = 2|B_H| - 1 \ge \frac{2m+1}{3}$. This implies that
  \[
  |(B-B) \cap A| \ge \left(\frac{2}{3}+\frac{1}{3m}\right)n + \left(\frac{1}{3}+\frac{1}{3p}\right)n - n > \frac{n}{3p} \ge 10t,
  \]
  contradicting our assumption, cf.~\eqref{eq:STA-B-B-lower}.

  \medskip
  \textsc{Case 2.} $m \equiv 2 \pmod 3$.
  \smallskip
  
  Let $q$ be the smallest prime factor of $m$ satisfying $q \equiv 2 \pmod 3$; $m$ has such a prime factor as otherwise $m \not\equiv 2 \pmod 3$. Since $m$ divides $n$ and $p$ is the smallest prime factor of $n$ satisfying $p \equiv 2 \pmod 3$, we must have $q \ge p$. But $m < 2p$ by~\eqref{eq:m-lower}, so necessarily $m = q$, that is, $m$ is a prime satisfying $m \equiv 2 \pmod 3$ and $m \ge p$. This means that $G/H$ is the cyclic group $\Z_m$.

  We now claim that $B_H \cap (B_H - B_H) = \emptyset$. Indeed, otherwise we would have $|(B+H) \cap (B-B)| \ge |H|$ (since $B-B$ is a union of cosets of $H$) and, since $B \subseteq A$, by~\eqref{eq:H-properties},
  \begin{equation}
    \label{eq:A-structure}
    |(B+H) \setminus A| \le |B+H|-|B| \le \frac{n/m-n/p+15t}{2} \le 7.5t,
  \end{equation}
  which in turn would yield
  \[
  |A \cap (B-B)| \ge |H| - 7.5t = n/m - 7.5t \ge (1-15\delta)n/p - 7.5t > 10t,
  \]
  contradicting our assumption, cf.~\eqref{eq:STA-B-B-lower}.
  
  Therefore, it must be that $B_H \cap (B_H - B_H) = \emptyset$, that is, $B_H \subseteq \Z_m$ is a sum-free set. But $|B_H| > m/3$, which means that $|B_H| = \frac{m+1}{3}$ and it follows from the results of Diananda and Yap~\cite{DiYa69} that, up to isomorphism, $B_H = \{\ell+1, \ldots, 2\ell+1\}$, where $m = 3\ell+2$.

  \medskip

  Let $A_0 = B+H$ and let $\varphi \colon G \to \Z_m$ be a homomorphism that maps $A_0$ to $\{\ell+1, \ldots, 2\ell+1\}$; in particular $H = \varphi^{-1}(0)$ and $B_H = \{\ell+1, \ldots, 2\ell+1\}$. As $-\varphi$ is also such a homomorphism and $-\ell = 2\ell+2$ in $\Z_m$, we may assume that $|A \cap \varphi^{-1}(\ell)| \ge |A \cap \varphi^{-1}(2\ell+2)|$. It follows from~\eqref{eq:A-structure} that $|A_0 \setminus A| \le 7.5t$. We shall now perform a stability analysis of $A$ and prove that~\eqref{eq:type-I} holds and the inequality there is strict unless $A_0 \subseteq A$, $m = p$, and $A \setminus A_0$ is a sum-free subset of $\varphi^{-1}(\{\ell\})$.

  We first claim that replacing an element from $A \setminus A_0$ with an element of $A_0 \setminus A$ only decreases the number of Schur triples in $A$. Indeed, as $A_0$ is sum-free, a given element of $A_0$ participates only in Schur triples that contain an element of $A \setminus A_0$; clearly, the number of such triples is at most $6|A \setminus A_0|$, which is at most $51t$ as
  \[
  |A \setminus A_0| = |A| - |A \cap A_0| = (|A_0| + t) - (|A_0| - |A_0 \setminus A|) \le 8.5t.
  \]
  On the other hand, for every $j \not\in \{\ell+1, \ldots, 2\ell+1\}$, there are $h, i \in \{\ell+1, \ldots, 2\ell+1\}$ such that $h + i = j$. Therefore if $x \in A \setminus A_0$, then, letting $j = \varphi(x) \not\in \{\ell+1, \ldots, 2\ell+1\}$, the number $\ST_x(A)$ of Schur triples in $A$ that contain $x$ satisfies
  \begin{multline*}
    \ST_x(A) \ge \left|\big(x - (\varphi^{-1}(i) \cap A)\big) \cap (\varphi^{-1}(h) \cap A)\right| \ge |\varphi^{-1}(i) \cap A| + |\varphi^{-1}(h) \cap A| - |H| \\
    \ge 2(|H| - 7.5t) - |H| \ge n/m - 15t > (1-15\delta)n/p - 15t > 51t.
  \end{multline*}
  Therefore, it suffices to prove~\eqref{eq:type-I}, and characterise all cases of equality there, under the assumption that $A_0 \subseteq A$.

  Now, note that each of $\ell$ and $2\ell+2$ participates in exactly three Schur triples with two elements of $\{\ell+1, \ldots, 2\ell+1\}$, namely: $(\ell, \ell+1, 2\ell+1)$, $(\ell+1, \ell, 2\ell+1)$, and $(2\ell+1,2\ell+1,\ell)$ and $(\ell+1,\ell+1,2\ell+2)$, $(2\ell+1,2\ell+2,\ell+1)$, and $(2\ell+2,2\ell+1,\ell+1)$. On the other hand, every element of $\Z_m \setminus \{\ell, \ldots, 2\ell+2\}$ participates in at least four such triples. It follows that:
  \begin{enumerate}
    \renewcommand{\theenumi}{(\roman{enumi})}
  \item
    \label{item:stability-1}
    Every element of $\varphi^{-1}(\{\ell,2\ell+2\})$ forms $3n/m$ Schur triples with two elements of $A_0$ and at most $6|A \setminus A_0|$ additional Schur triples with two elements of $A$ (one of which is not in $A_0$).
  \item
    \label{item:stability-2}
    Every element of $\varphi^{-1}(\Z_m \setminus \{\ell, \ldots, 2\ell+2\})$ forms at least $4n/m$ Schur triples with two elements of $A_0$.
  \end{enumerate}
  Since $|A_0| = (\frac{1}{3} + \frac{1}{3m})n$, our assumption that $A_0 \subseteq A$ and~\eqref{eq:m-lower} imply that
  \[
  |A \setminus A_0| = \frac{n}{3p} + t - \frac{n}{3m} \le \left(\frac{1}{3} +\delta\right)\frac{n}{p} - \frac{n}{3m} \le \left[\frac{1}{1-15\delta}\left(\frac{1}{3} + \delta\right) - \frac{1}{3}\right] \frac{n}{m} < \frac{n}{6m}.
  \]
  It therefore follows from~\ref{item:stability-1} and~\ref{item:stability-2} that moving elements from $A \cap \varphi^{-1}(\Z_m \setminus \{\ell, \ldots, 2\ell+2\})$ to $\varphi^{-1}(\{\ell,2\ell+2\})$ only decreases $\ST(A)$. Therefore, we may restrict our attention to sets $A$ satisfying
  \[
  \varphi^{-1}(\{\ell+1, \ldots, 2\ell+1\}) = A_0 \subseteq A \subseteq A_0 \cup \varphi^{-1}(\{\ell, 2\ell+2\}) = \varphi^{-1}(\{\ell, \ldots, 2\ell+2\}).
  \]

  Observe that if $\ell > 0$, then every ordered pair of elements $(x,y) \in \varphi^{-1}(\ell)^2 \cup \varphi^{-1}(2\ell+2)^2$ satisfying $(x,y) \in A^2$ participates in a unique Schur triple (in $A$) in which $x$ precedes $y$, the triple $(x,y,x+y)$. On the other hand, if $\ell > 0$, then every pair $(x,y) \in \varphi^{-1}(\ell) \times \varphi^{-1}(2\ell+2)$ satisfying $(x,y) \in A^2$ participates in four Schur triples in $A$: the triples $(x,y-x,y)$, $(y-x,x,y)$, $(y,x-y,x)$, and $(x-y,y,x)$. Counting separately Schur triples in $A$ that contain two (using~\ref{item:stability-1}), one (using the above observation), and no elements of $A_0$ yields
  \[
  \begin{split}
    \ST(A) & \ge |A \setminus A_0| \cdot \frac{3n}{m} + \1[m > 2] \cdot |A \setminus A_0|^2 + \ST(A \setminus A_0) \\
    & = \left( \frac{n}{3p} + t - \frac{n}{3m} \right) \cdot \frac{3n}{m} + \1[m > 2] \cdot \left( \frac{n}{3p} + t - \frac{n}{3m} \right)^2 + \ST(A \setminus A_0) \\
    & \ge \frac{3nt}{p} + \1[p > 2] \cdot t^2 + \ST(A \setminus A_0),
  \end{split}
  \]
  where the first inequality is strict unless $\ell = 0$, $A \cap \varphi^{-1}(\ell) = \emptyset$, or $A \cap \varphi^{-1}(2\ell+2) = \emptyset$ and the last inequality is strict unless $m = p$ (recall that $p \le m \le 2p$). This completes the proof.
\end{proof}

\section{The hypercube $\Z_2^n$}

\label{sec:Z_2n}

In this section, we prove Theorem~\ref{thm:Z_2n}. One way of obtaining lower bounds on $\ST(A)$ in our proof will be using eigenvalue analysis of the \emph{Cayley graph} of $G$ generated by $A$. Recall that given an abelian group $G$ and an $A \subseteq G \setminus \{0\}$ satisfying $A = -A$, we define $\cG_A$ to be the graph with vertex set $G$ whose edges are all pairs $\{x,y\}$ such that $x-y \in A$. It follows from this definition that for every $A \subseteq G \setminus \{0\}$ with $A = -A$,
\begin{equation}
  \label{eq:STA-ecGA}
  \ST(A) = 2e(\cG_A[A]),
\end{equation}
where $\cG_A[A]$ denotes the subgraph of $\cG_A$ induced by $A$. We shall derive lower bounds on $e(\cG_A[A])$ using the following well-known result of Alon and Chung~\cite{AlCh88}.

\begin{thm}
  \label{thm:AlCh}
  Let $\cG$ be an $N$-vertex $D$-regular graph and let $\lambda$ be the smallest eigenvalue of its adjacency matrix. Then for every $U \subseteq V(\cG)$,
  \[
  2e(\cG[U]) \ge \frac{D}{N} |U|^2 + \frac{\lambda}{N} |U| (N - |U|).
  \]
\end{thm}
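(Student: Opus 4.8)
The plan is to reduce the whole statement to a single quadratic‑form estimate for the adjacency matrix of $\cG$. Write $M$ for that matrix, set $u = |U|$ and $N = |V(\cG)|$, and let $\1_U \in \mathbb{R}^{V(\cG)}$ be the indicator vector of $U$. Since $\cG$ is simple, $\1_U^T M \1_U$ counts exactly the ordered pairs $(x,y)$ with $x,y \in U$ and $xy \in E(\cG)$, so $\1_U^T M \1_U = 2e(\cG[U])$; thus it suffices to bound $\1_U^T M \1_U$ from below.

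First I would exploit $D$-regularity: the all‑ones vector $\1$ is an eigenvector of $M$ with eigenvalue $D$. Write $\1_U = \tfrac{u}{N}\1 + w$, the decomposition into the component along $\1$ and the component $w$ orthogonal to $\1$. Expanding the quadratic form,
\[
\1_U^T M \1_U = \frac{u^2}{N^2}\,\1^T M \1 + \frac{2u}{N}\,\1^T M w + w^T M w .
\]
Here $\1^T M \1 = DN$ because every row of $M$ has sum $D$, and $\1^T M w = D\,\1^T w = 0$ since $w \perp \1$. For the remaining term, symmetry of $M$ and the Rayleigh‑quotient characterisation of its least eigenvalue give $w^T M w \ge \lambda\|w\|^2$, valid for every vector $w$ and irrespective of the sign of $\lambda$. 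Finally, by orthogonality, $\|w\|^2 = \|\1_U\|^2 - \tfrac{u^2}{N^2}\|\1\|^2 = u - \tfrac{u^2}{N} = \tfrac{u(N-u)}{N}$. Substituting these back yields
\[
2e(\cG[U]) = \frac{D}{N}u^2 + w^T M w \ge \frac{D}{N}u^2 + \frac{\lambda}{N}u(N-u),
\]
which is exactly the claimed inequality.

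This is a standard spectral computation, so I do not expect a genuine obstacle; the only points that warrant care are the bookkeeping factor of $2$ relating $e(\cG[U])$ to $\1_U^T M \1_U$, and the fact that the bound $w^T M w \ge \lambda\|w\|^2$ requires no positivity of $\lambda$, being merely the minimum of the Rayleigh quotient of a symmetric matrix. If one prefers to avoid the explicit splitting, the same conclusion follows by fixing an orthonormal eigenbasis $v_1 = \1/\sqrt{N}, v_2, \ldots, v_N$ of $M$ with eigenvalues $D = \mu_1 \ge \mu_2 \ge \cdots \ge \mu_N = \lambda$, writing $\1_U = \sum_i c_i v_i$ with $c_1 = u/\sqrt{N}$ and $\sum_i c_i^2 = \|\1_U\|^2 = u$, and estimating $\1_U^T M \1_U = \sum_i \mu_i c_i^2 \ge \mu_1 c_1^2 + \lambda\sum_{i \ge 2} c_i^2 = \tfrac{D}{N}u^2 + \lambda\big(u - \tfrac{u^2}{N}\big)$.
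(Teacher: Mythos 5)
Your proof is correct: writing $\1_U = \frac{|U|}{N}\1 + w$ with $w\perp\1$, using $\1^T M\1 = DN$, the vanishing cross term, the Rayleigh-quotient bound $w^T M w \ge \lambda\|w\|^2$, and $\|w\|^2 = |U|(N-|U|)/N$ gives exactly the stated inequality, and the factor-of-two bookkeeping $\1_U^T M \1_U = 2e(\cG[U])$ is handled properly. The paper itself offers no proof of this theorem, quoting it as a known result of Alon and Chung~\cite{AlCh88}; your argument is the standard spectral decomposition proof of that result, so there is nothing further to reconcile.
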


Precise eigenvalue analysis of $\cG_A$ will be possible in our setting due to the fact that the \emph{characters} of any abelian group $G$ form a basis of  eigenvectors of $\cG_A$ for every $A$. Moreover, in the case $G = \Z_2^n$, there is a one-to-one correspondence between nontrivial characters of $G$ and subgroups of $G$ with index two. More precisely, for each nontrivial character $\chi \in \hat{G}$ there is a subgroup $H < G$ of index $2$ such that
\[
\chi(x) =
\begin{cases}
  1, & \text{if $x \in H$}, \\
  -1, & \text{if $x \not\in H$}.
\end{cases}
\]
In particular, the smallest eigenvalue $\lambda$ of $\cG_A$ satisfies
\[
\lambda = \min\big\{|A \cap H| - |A \cap H^c| \colon \text{$H < G$ with $[G:H]=2$} \big\}.
\]

\begin{proof}[{Proof of Theorem~\textup{\ref{thm:Z_2n}}}.]
  Let $n$ be a positive integer, let $G = \Z_2^n$, and let $a$ and $k$ be as in the statement of the theorem.  We may assume that $k \le n-1$ as otherwise both~\eqref{eq:Z_2n} and the characterisation of sets achieving equality are vacuous. We shall first count Schur triples in the set $A_a$ and establish the equality in~\eqref{eq:Z_2n}. Given an $x \in G$, let us denote by $\overline{x}$ the integer with binary representation $x$ (viewed as a $\{0,1\}$-vector), so that
  \[
  \{\overline{x} \colon x \in A_a \} = \{2^n-1, \ldots, 2^n-a\}.
  \]
  Fix some $x \in A_a$ and let $j$ be the unique integer such that $2^j \le \overline{x} < 2^{j+1}$. We claim that the number $\ST_x(A_a)$ of (ordered) Schur triples containing $x$ such that $\overline{x}$ is the smallest element is $3(2^n-2^{j+1})$. (As $0 \not\in A_a$, each Schur triple in $A_a$ contains three distinct nonzero elements.) It is enough to show that the number of pairs $\{y,z\} \subseteq A_a$ satisfying $x + y + z = 0$ and $\overline{x} < \overline{y} < \overline{z}$ is $2^{n-1} - 2^j$. To this end, note first that for every such pair, $\overline{z} \ge 2^{j+1}$ since otherwise $2^j \le \overline{x}, \overline{y}, \overline{z} < 2^{j+1}$ and then $\overline{x+y+z} \ge 2^j$; consequently, also $\overline{y} \ge 2^{j+1}$ as otherwise $\overline{x+y+z} \ge 2^{j+1}$. Conversely, given an arbitrary element $z$ such that $\overline{z} \ge 2^{j+1}$, we have $\overline{z+x} \ge 2^{j+1} > \overline{x}$ and hence $\{z, z+x\} \subseteq A_a$ is such a pair. Thus the number of these pairs is $\frac{1}{2}(2^n-2^{j+1})$, as claimed. 

  Now, recall that $k$ satisfies $2^n - 2^k \le a < 2^n - 2^{k-1}$. We may assume that $k \le n-1$ as otherwise $A_a \subseteq A_{2^{n-1}}$ is sum-free.  Let $t = a - 2^n + 2^k$. Then $0 \le t < 2^{k-1}$ and
  \[
  \begin{split}
    \ST(A_a) & = \sum_{x \in A_a} \ST_x(A_a) = 3 \cdot \left[\sum_{j = k}^{n-1} 2^j (2^n-2^{j+1}) + t (2^n - 2^k)\right] \\
    & = 3(2^n-2^k)2^n - 2(4^n-4^k) + 3(a-2^n+2^k)(2^n-2^k) \\
    & = (3a - 2^{n+1} + 2^k)(2^n-2^k).
  \end{split}
  \]

  Let us now fix an arbitrary $a$-element set $A \subseteq G$. We shall prove that $\ST(A) \ge \ST(A_a)$ by induction on $n$. We may assume that $0 \not\in A$ as one may easily check that replacing $0$ with an arbitrary element of $G \setminus A$ decreases the number of Schur triples by at least one (as $x + x \neq x$ unless $x = 0$). The case $n=1$ is trivial, including the characterisation of sets achieving equality in~\eqref{eq:Z_2n}, so let us assume that $n \ge 2$. Let $H < G$ be the subgroup of index $2$ that minimises $|A \cap H|$, let $\Ae = A \cap H$ (the set of `even' elements of $A$), and let $\Ao = A \cap H^c$ (the set of `odd' elements of $A$). Moreover, let $\Aes = |\Ae|$ and $\Aos = |\Ao|$.

  Observe that each Schur triple in $A$ contains an even number of `odd' elements (elements of $\Ao$). As $H \cong \Z_2^{n-1}$, the number of Schur triples that contain only elements of $\Ae$ satisfies
  \begin{equation}
    \label{eq:ST-Ae}
    \ST(\Ae) \ge \ST^{(n-1)}\left(A_{\Aes}^{(n-1)}\right),
  \end{equation}
  where the superscript `$(n-1)$' signifies the fact that we are referring to elements and Schur triples in $\Z_2^{n-1}$. The number of Schur triples that contain one element of $\Ae$ and two elements of $\Ao$ may be estimated as follows. Fix some $x \in \Ae$. The number of Schur triples in $A$ that contain $x$ and two elements of $\Ao$ is precisely $3|(x+\Ao) \cap \Ao|$, as the elements of $(x+\Ao) \cap \Ao$ are in one-to-one correspondence with ordered pairs $(y,z) \in \Ao^2$ such that $x+y+z=0$. Since $x+\Ao, \Ao \subseteq H^c$, then
  \begin{equation}
    \label{eq:AoxAo}
    |(x+\Ao) \cap \Ao| \ge |x+\Ao| + |\Ao| - |H^c| = 2|\Ao| - |H| = 2\Aos - 2^{n-1}
  \end{equation}
  and consequently, recalling~\eqref{eq:ST-Ae},
  \begin{equation}
    \label{eq:STA-lower}
    \ST(A) \ge \ST(\Ae) + 3\Aes(2\Aos - 2^{n-1}) \ge \ST^{(n-1)}\left(A_{\Aes}^{(n-1)}\right) + 3\Aes(2(a-\Aes) - 2^{n-1}).
  \end{equation}

  We claim that the right-hand side of~\eqref{eq:STA-lower} is greater than or equal to $\ST(A_a)$ as long as
  \begin{equation}
    \label{eq:Aes-case1}
    \Aes < \max\left\{ 2^{n-1}-2^{k-1} , a - 2^{n-1} + 2^{k-2} \right\},
  \end{equation}
  and equality holds only when $\Aes = a - 2^{n-1}$ or when $\Aes = 2^{n-1} - 2^{k-1}$ and $a > 2^n - 2^{k-1} - 2^{k-2}$.

  To see this, note first that $\Aes \ge a - |H^c| = a - 2^{n-1} \ge 2^{n-1} - 2^k$. If moreover $\Aes < 2^{n-1} - 2^{k-1}$, then by our inductive assumption,
  \begin{equation}
    \label{eq:STAe-lower}
    \ST^{(n-1)}\left(A_{\Aes}^{(n-1)}\right) = (3\Aes + 2^k - 2^n)(2^{n-1}-2^k).
  \end{equation}
  Substituting the right-hand side of~\eqref{eq:STAe-lower} into~\eqref{eq:STA-lower}, we verify that if $a - 2^{n-1} \le \Aes < 2^{n-1} - 2^{k-1}$, then the right-hand side of~\eqref{eq:STA-lower} is at least as large as $\ST(A_a)$, with equality holding only if $\Aes = a - 2^{n-1}$. To see this, observe that the difference of these functions is
  \[
  6\left(\Aes-2^{n-1}+2^{k-1}\right) \left(a-\Aes-2^{n-1}\right),
  \]
  which is quadratic in~$\Aes$, with the coefficient of $\Aes^2$ negative, and equal to zero if $\Aes = a - 2^{n-1}$ or $\Aes = 2^{n-1} - 2^{k-1}$.

  Assume now that $2^{n-1} - 2^{k-1} \le \Aes < a - 2^{n-1} + 2^{k-2}$. In particular, $\Aes < 2^{n-1} - 2^{k-2}$ and hence by the inductive assumption,
  \begin{equation}
    \label{eq:STAe-lower-2}
    \ST^{(n-1)}\left(A_{\Aes}^{(n-1)}\right) = (3\Aes + 2^{k-1} - 2^n)(2^{n-1}-2^{k-1}).
  \end{equation}
  Substituting the right-hand side of~\eqref{eq:STAe-lower-2} into \eqref{eq:STA-lower}, we verify that if $2^{n-1} - 2^{k-1} \le \Aes < a - 2^{n-1} + 2^{k-2}$, then the right hand side of~\eqref{eq:STA-lower} is at least as large as $\ST(A_a)$, with equality holding only if $\Aes = 2^{n-1} - 2^{k-1}$. To see this, observe that the difference of these functions is
  \[
  6\left(\Aes-2^{n-1}+2^{k-1}\right)\left(a-\Aes-2^{n-1}+2^{k-2}\right),
  \]
  which is quadratic in $\Aes$, with the coefficient of $\Aes^2$ negative, and equal to zero if $\Aes = 2^{n-1} - 2^{k-1}$ or $\Aes = a - 2^{n-1} + 2^{k-2}$.

  For the remainder of the proof, we may and shall assume that the reverse of~\eqref{eq:Aes-case1} holds, i.e., that $\Aes \ge 2^{n-1} - 2^{k-1}$ and $\Aes \ge a - 2^{n-1} + 2^{k-2}$. By our choice of $H$, this means that the smallest eigenvalue $\lambda$ of $\cG_A$ satisfies
  \[
  \lambda = |A \cap H| - |A \cap H^c|  = \Aes - \Aos = 2\Aes - a \\
  \ge 2\max\left\{ 2^{n-1} - 2^{k-1}, a - 2^{n-1} + 2^{k-2} \right\} - a,
  \]
  that is,
  \[
  \lambda \ge
  \begin{cases}
    -2^{k-2} & \text{if $a \le 2^n - 2^{k-1} - 2^{k-2}$,} \\
    a - 2^n + 2^{k-1} & \text{if $a \ge 2^n - 2^{k-1} - 2^{k-2}$.}
  \end{cases}
  \]
  As every element of $G$ has order $2$, then trivially $A = -A$ and thus it follows from~\eqref{eq:STA-ecGA} and Theorem~\ref{thm:AlCh} that (recall that $0 \not\in A$)
  \begin{equation}
    \label{eq:STA-lower-eigenvalues}
    \ST(A) \ge 2^{-n}a^3 + \begin{cases}
      -2^{k-2-n}a(2^n-a) & \text{if $a \le 2^n - 2^{k-1} - 2^{k-2}$,} \\
      2^{-n}(a - 2^n + 2^{k-1})a(2^n-a) & \text{if $a \ge 2^n - 2^{k-1} - 2^{k-2}$.}
    \end{cases}
  \end{equation}
  In order to finish the proof, we shall now show that the right-hand side of~\eqref{eq:STA-lower-eigenvalues} is greater than $\ST(A_a)$ for every $a$ with $2^n - 2^k \le a < 2^n - 2^{k-1}$.

  First, denote by $f_1(a)$ the difference between the right-hand side of~\eqref{eq:STA-lower-eigenvalues} and $\ST(A_a)$ when $2^n - 2^k \le a \le 2^n - 2^{k-1} - 2^{k-2}$. That is, let
  \[
  f_1(a) = 2^{-n} a^3 + 2^{k-2-n} a^2 - 2^{k-2}a - (3a+2^k-2^{n+1})(2^n-2^k).
  \]
  Let $a_\ell = 2^n - 2^k$ and $a_r = 2^n - 2^{k-1} - 2^{k-2}$. We need to show that $f_1(a) > 0$ for each $a$ satisfying $a_\ell \le a \le a_r$. This follows because $f_1$ is cubic in $a$, with the coefficient of $a^3$ positive, and
  \begin{itemize}
  \item
    $f_1(a_r) = 2^{2k-2} - 9 \cdot 2^{3k-n-5} \ge 2^{2k-2} - 9 \cdot 2^{2k-6} = 7 \cdot 2^{2k-6} > 0$, since $k \le n-1$.
  \item
    $f_1'(a_\ell) = 2^{k-2} (5 \cdot 2^{k-n+1} - 11) \le -6 \cdot 2^{k-2} < 0$, since $k \le n-1$.
  \item
    $f_1'(a_r) = 2^{k-4} (21 \cdot 2^{k-n} - 20) \le -19 \cdot 2^{k-5} < 0$, since $k \le n-1$.
 \end{itemize}
 Indeed, since $f'_1$ is quadratic in $a$, with the coefficient of $a^2$ positive, it has only one continuous interval where it takes negative values. Therefore $f'_1(a)$ is negative for all $a \in [a_\ell, a_r]$ and thus $f_1$ is decreasing in this interval, attaining its minimum at $a=a_r$.

 Similarly, denote by $f_2(a)$ the difference between the right-hand side of~\eqref{eq:STA-lower-eigenvalues} and $\ST(A_a)$ when $2^n - 2^{k-1} - 2^{k-2} \le a < 2^n - 2^{k-1}$. That is, let
 \begin{multline*}
   f_2(a) = 2^{-n} a^3 + 2^{-n} (a-2^n+2^{k-1})a(2^n-a) - (3a+2^k-2^{n+1})(2^n-2^k) \\
   = (2-2^{k-n-1}) a^2 + (7 \cdot 2^{k-1} - 2^{n+2}) a + 2^{2n+1} +2^{2k} - 3 \cdot 2^{k+n}.
 \end{multline*}
 Let $a_\ell = 2^n - 2^{k-1} - 2^{k-2}$ and $a_r = 2^n - 2^{k-1}$. We need to show that $f_2(a) > 0$ for each $a$ satisfying $a_\ell \le a \le a_r$. This follows because $f_2$ is quadratic in $a$, with the coefficient of $a^2$ positive, and
 \begin{itemize}
 \item 
   $f_2'(a_m) = 0$, where $a_m = \frac{2^{n+3} - 7 \cdot 2^k}{8 - 2^{k+1-n}}$.
 \item
   $f_2(a_m) = \frac{7 \cdot 2^{n+2k-3} - 2^{3k}}{2^{n+2} - 2^k} > 0$, since $k \le n-1$.
 \end{itemize}

 Finally, we characterise sets achieving equality in~\eqref{eq:Z_2n}. To this end, suppose that $\ST(A) = \ST(A_a)$. This means, in particular, that~\eqref{eq:Aes-case1} holds (as otherwise $\ST(A) > \ST(A_a)$), the two inequalities in~\eqref{eq:STA-lower} hold with equality, and the right-hand side of~\eqref{eq:STA-lower} is equal to $\ST(A_a)$. As noted above, this may happen only in the following two cases.

  \medskip
  \textsc{Case 1.} $\Aes = a - 2^{n-1}$.
  \smallskip

  In this case, $\Aos = a - \Aes = 2^{n-1}$ and thus $A \supseteq H^c$. Moreover, $\ST(\Ae) = \ST^{(n-1)}\left(A_{\Aes}^{(n-1)}\right)$ and hence we may appeal to our inductive assumption. Since
  \[
  2^{n-1} - 2^k \le \Aes = a - 2^{n-1} < 2^{n-1} - 2^{k-1},
  \]
  then there is a $K < H$ with $2^k$ elements such that $H \setminus K \subseteq \Ae$ and $\Ae \cap K$ is sum-free. But then the set $A \cap K = \Ae \cap K$ is sum-free, and $\Z_2^n \setminus K = H^c \cup (H \setminus K) \subseteq A$.

  \medskip
  \textsc{Case 2.} $\Aes = 2^{n-1} - 2^{k-1}$ and $a > 2^n - 2^{k-1} - 2^{k-2}$.
  \smallskip

  Since $\ST(\Ae) = \ST^{(n-1)}\left(A_{\Aes}^{(n-1)}\right)$, we may appeal to the inductive assumption and deduce that $\Ae = H \setminus K$ for some $K < H$ with $2^{k-1}$ elements. Let $L$ be an arbitrary subgroup satisfying $K < L < H$ and $[H : L] = 2$. Such a subgroup exists as $H / K \cong \Z_2^{n-k}$ and we have assumed that $n \ge k-1$.

  Note that $\Z_2^n / L \cong Z_2^2$ and hence there are two subgroups $H_1, H_2 < \Z_2^n$ such that $[\Z_2^n : H_i] = 2$ and $H \cap H_i = L$ for each $i \in \{1,2\}$ and $H^c \cap H_1 = H^c \setminus H_2$. Define $\Ao^1 = \Ao \cap H_1 = \Ao \setminus H_2$ and $\Ao^2 = \Ao \cap H_2 = \Ao \setminus H_1$.  We claim that $\Ao^{3-i} = H^c \cap H_i^c$ for some $i \in \{1,2\}$. Before we prove the claim, let us show that its statement contradicts the assumption that $a < 2^n - 2^{k-1}$, which in turn implies that $\ST(A) = \ST(A_a)$ cannot hold in Case 2. As $H \setminus L \subseteq H \setminus K = \Ae \subseteq A$, the claim implies that $H_i^c = (H_i^c \cap H^c) \cup (H \setminus L) \subseteq A$. But $H_i$ is a subgroup of $\Z_2^n$ of index $2$ and therefore by our choice of $H$, we have
  \[
  2^{n-1} - 2^{k-1} = a_e = |A \cap H| \le |A \cap H_i| = |A| - |A \cap H_i^c| = |A| - |H_i^c| = a - 2^{n-1},
  \]
  a contradiction. Therefore, in order to complete the proof, it suffices to prove the claim. To this end, suppose that it is not true, i.e., there are $y_1 \in (H^c \cap H_2^c) \setminus \Ao$ and $y_2 \in (H^c \cap H_1^c) \setminus \Ao$. Let $x \in H \setminus L$ be such that $x + y_1 = y_2$; such an $x$ exists as $(H^c \cap H_1^c) - (H^c \cap H_2^c) = H \setminus L$. It is easy to see that this $x$ satisfies
  \[
  |(x + \Ao) \cap \Ao| = |(x+\Ao) \cap \Ao \cap (H^c \setminus \{y_2\})| \ge 2|\Ao| - |H^c \setminus \{y_2\}| > 2|\Ao| - |H^c|.
  \]
  Since $x \in H \setminus L \subseteq H \setminus K = \Ae$, then the first inequality in~\eqref{eq:STA-lower} is strict, see~\eqref{eq:AoxAo}, contradicting our assumption that $\ST(A) = \ST(A_a)$.
\end{proof}

\section{Groups of type~\tII}

\label{sec:type-II}

In this section, we prove Theorem~\ref{thm:Z_3n} and Proposition~\ref{prop:Z3Zp}. Our proof of Theorem~\ref{thm:Z_3n} will again employ simple eigenvalue analysis. Given an abelian group $G$ and an $A \subseteq G$, we define $\dG_A$ to be the directed graph with vertex set $G$ whose arcs are all ordered pairs $(x,y)$ such that $y - x \in A$. It follows from this definition that for each $A \subseteq G$,
\[
\ST(A) = e(\dG_A[A]).
\]
A straightforward adaptation of the proof of Theorem~\ref{thm:AlCh} yields the following proposition. Here, the adjacency matrix of a directed graph $\dG$ with vertex set $V$ is the $\{0,1\}$-valued $V$-by-$V$ matrix $\big(\1[(x,y) \in \dG]\big)_{x,y \in V}$.

\begin{prop}
  \label{prop:AlCh-directed}
  Let $\dG$ be an $N$-vertex directed graph whose each vertex has both the in- and the outdegree equal to $D$. If the adjacency matrix of $\dG$ has an orthogonal basis of eigenvectors with eigenvalues satisfying $\Re(\lambda) \ge r$, then for every $U \subseteq V(\dG)$,
  \[
  e(\dG[U]) \ge \frac{D}{N} |U|^2 + \frac{r}{N} |U|(N-|U|).
  \]
\end{prop}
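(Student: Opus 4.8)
The plan is to follow the standard spectral proof of Theorem~\ref{thm:AlCh} almost verbatim, the only genuinely new point being a one-line observation that accommodates the possible non-symmetry of the adjacency matrix $M$ of $\dG$. Write $N = |V(\dG)|$, let $\mathbf{1} \in \mathbb{R}^N$ be the all-ones vector, and for $U \subseteq V(\dG)$ let $\chi_U \in \{0,1\}^N$ be its characteristic vector. By the definition of the adjacency matrix, the number of arcs of $\dG$ with both endpoints in $U$ is $e(\dG[U]) = \chi_U^{\mathsf T} M \chi_U$. Decompose $\chi_U = \frac{|U|}{N}\mathbf{1} + v$ with $v \perp \mathbf{1}$; then $\|v\|^2 = \|\chi_U\|^2 - \frac{|U|^2}{N} = \frac{|U|(N - |U|)}{N}$.

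Because every vertex of $\dG$ has out-degree $D$, all row sums of $M$ equal $D$, so $M\mathbf{1} = D\mathbf{1}$; because every vertex has in-degree $D$, all column sums of $M$ equal $D$, so $\mathbf{1}^{\mathsf T} M = D\mathbf{1}^{\mathsf T}$. Hence, expanding $\chi_U^{\mathsf T} M \chi_U$ and using $\mathbf{1}^{\mathsf T} v = 0$, both cross terms vanish, $\mathbf{1}^{\mathsf T} M \mathbf{1} = DN$, and we obtain
\[
e(\dG[U]) = \frac{D}{N}|U|^2 + v^{\mathsf T} M v.
\]
It therefore suffices to show that $v^{\mathsf T} M v \ge r\|v\|^2$.

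For this, fix an orthonormal basis $w_1, \dots, w_N$ of $\mathbb{C}^N$ consisting of eigenvectors of $M$ (which exists by hypothesis, after normalising), say $M w_i = \lambda_i w_i$, and expand $v = \sum_i c_i w_i$. Since $v$ is a real vector, $v^{\mathsf T} M v$ coincides with the Hermitian inner product $\langle M v, v \rangle$, and expanding in the basis gives $v^{\mathsf T} M v = \sum_i |c_i|^2 \lambda_i$. This quantity is a real number, hence equal to its own conjugate, and therefore $v^{\mathsf T} M v = \sum_i |c_i|^2 \Re(\lambda_i) \ge r \sum_i |c_i|^2 = r\|v\|^2$, using $\Re(\lambda_i) \ge r$ for every $i$ (which in particular forces $r \le D$, as $D$ is itself one of the $\lambda_i$). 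Substituting this into the displayed identity yields the claimed bound.

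The step I expect to be the (mild) obstacle is this last one: since $M$ need not be symmetric, one cannot simply invoke the Rayleigh quotient as in Theorem~\ref{thm:AlCh}, and its eigenvalues may be genuinely complex. The resolution — observing that the real quadratic form $v \mapsto v^{\mathsf T} M v$ sees the eigenvalues only through their real parts, which are exactly the quantities controlled by the hypothesis $\Re(\lambda) \ge r$ — is what makes the adaptation go through; every other step is identical to the proof of Theorem~\ref{thm:AlCh}.
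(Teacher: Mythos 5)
Your proof is correct and is precisely the ``straightforward adaptation'' of the Alon--Chung argument that the paper invokes for Proposition~\ref{prop:AlCh-directed}: the same decomposition $\chi_U = \frac{|U|}{N}\mathbf{1} + v$ with $v \perp \mathbf{1}$, using the equal in- and out-degrees to kill both cross terms, and handling the non-symmetric $M$ by expanding the real quadratic form $v^{\mathsf T} M v$ in the Hermitian-orthonormal eigenbasis and passing to real parts, which is exactly where the hypothesis $\Re(\lambda) \ge r$ enters. No gaps; nothing further is needed.
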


As it was the case with undirected Cayley graphs, the characters of $G$ form a basis of eigenvectors of $\dG_A$ for every $A \subseteq G$. Moreover, the eigenvalues of $\dG_A$ are $\sum_{a \in A} \chi(A)$, where $\chi$ ranges over all $|G|$ characters of $G$. In the case $G = \Z_3^n$, as each element of $G$ has order $3$, all characters of $G$ take values in the set $\{1, e^{\frac{2\pi i}{3}}, e^{-\frac{2\pi i}{3}}\} \subseteq \mathbb{C}$ of third roots of unity. Therefore, letting $r_A$ be the smallest real part of an eigenvalue of the adjacency matrix of $\dG_A$, we have
\begin{equation}
  \label{eq:rA}
  r_A = \min\big\{|A \cap H| - |A \setminus H|/2 \colon H < G \text{ with } [G:H] = 3\big\},
\end{equation}
where we used the fact that $\Re(e^{\pm\frac{2\pi i}{3}}) = \cos\frac{2\pi}{3} = -\frac{1}{2}$. Proposition~\ref{prop:AlCh-directed} now implies that for every $A \subseteq G$,
\begin{equation}
  \label{eq:STA-Z3n-eigenvalues}
  \ST(A) \ge 3^{-n}|A|^3 + r_A|A|\left(1-3^{-n}|A|\right),
\end{equation}
where $r_A$ is the quantity defined in~\eqref{eq:rA}.

\begin{proof}[{Proof of Theorem~\textup{\ref{thm:Z_3n}}}.]
  Let $\delta = 1/1000$, let $n$ and $t$ be as in the statement of the theorem, and denote $\Z_3^n$ by $G$. Finally, fix some $A \subseteq G$ with $a = 3^{n-1} + t$ elements. Let us first consider the case when $|A \cap H| \ge t$ for every subgroup $H < G$ of index $3$. In this case, the quantity $r_A$ defined in~\eqref{eq:rA} satisfies
  \[
  r_A \ge -\frac{a-t}{2} + t = -\frac{3^{n-1}}{2} + t
  \]
  and consequently~\eqref{eq:STA-Z3n-eigenvalues} yields
  \begin{multline*}
    \ST(A) \ge 3^{-n} a^3 + \left(t - \frac{3^{n-1}}{2}\right) a \left(1 - 3^{-n}a\right) = a\left( t + 3^{-n}a(a-t) + \frac{a}{6} - \frac{3^{n-1}}{2} \right) \\
    = a \left(t + \frac{a}{3} + \frac{a}{6} - \frac{a-t}{2}\right) = \frac{3at}{2} \ge at = 3^{n-1}t + t^2.
  \end{multline*}
  Hence, for the remainder of the proof we may assume that $|A \cap H| < t$ for some $H < G$ of index~$3$.

  Fix an arbitrary $H$ with this property and let $\varphi \colon G \to \Z_3$ be a homomorphism with $\varphi^{-1}(0) = H$. For each $i \in \{0,1,2\}$, let $A_i = A \cap \varphi^{-1}(i)$ and let $a_i = |A_i|$. As $-\varphi$ is also a homomorphism, we may assume that $a_2 \ge a_1$. Considering only the Schur triples $(x,y,z) \in A^3$ that $\varphi$ maps to $(0,2,2)$, $(2,0,2)$, $(2,2,1)$, and $(1,1,2)$, we obtain
  \begin{equation}
    \label{eq:STA-Z3n-lower}
    \begin{split}
      \ST(A) & \ge 2 \sum_{x \in A_0} |(x + A_2) \cap A_2| + \sum_{x \in A_1} |(x-A_2) \cap A_2| + \sum_{x \in A_1} |(x+A_1) \cap A_2| \\
      & \ge 2a_0(2a_2 - 3^{n-1}) +a_1 (2a_2 - 3^{n-1}) + a_1(a_1+a_2-3^{n-1}) \\
      & = (a-a_2)(2a_2 - 3^{n-1} + t) + a_0(3a_2+a_0-2a).
    \end{split}
  \end{equation}
  where we used the identity $3^{n-1} + t = a = a_0 + a_1 + a_2$. Treating $a_0$ as fixed, denote the right-hand side of~\eqref{eq:STA-Z3n-lower} by $f_{a_0}(a_2)$. Observe that the function $f_{a_0}$ is quadratic in $a_2$, with the coefficient of $a_2^2$ negative. Thus, if $2 \cdot 3^{n-2} \le a_2 \le 3^{n-1}$, then
  \[
  \ST(A) \ge \min\{f_{a_0}(2\cdot3^{n-2}), f_{a_0}(3^{n-1})\}.
  \]
  Note that
  \[
  f_{a_0}(3^{n-1}) = t(3^{n-1}+t) + a_0(3^{n-1}+a_0-2t) \ge t(3^{n-1}+t),
  \]
  as $t \le \delta 3^{n-1} \le 3^{n-1}/2$. Moreover,
  \[
  f_{a_0}(2 \cdot 3^{n-2}) = (3^{n-2} + t)^2 + a_0(a_0-2t) \ge  (3^{n-2}+t)^2 - t^2 \ge t(3^{n-1}+t),
  \]
  as $t \le \delta 3^{n-1}$ and $(\frac{1}{3}+\tau)^2 - \tau^2 \ge \tau(1+\tau)$ for each $\tau \in [0,\delta]$. Therefore, it remains to consider the case $a_1 \le a_2 < 2 \cdot 3^{n-2}$ and $a_0 < t$.

  We let $\eps = 1/30$ and define
  \[
  C = \{x \in A_2 \colon |(x+A_2) \cap A_1| \ge \eps 3^{n-1}\}.
  \]
  Since clearly $\ST(A) \ge |C| \cdot \eps 3^{n-1}$, we may further assume that
  \[
  |C| \le \frac{at}{\eps 3^{n-1}} \le \frac{(1+\delta)}{\eps} \cdot t \le \frac{(1+\delta)\delta}{\eps} \cdot 3^{n-1} \le \eps 3^{n-1}.
  \]
  In the remainder of the proof, we show that this is impossible.

  Let $B = A_2 \setminus C$. By definition, for every $x \in B$, we have $x + B \subseteq A_2 + A_2 \subseteq \varphi^{-1}(1)$ and $|(x+B) \setminus A_1| \ge |B| - \eps 3^{n-1}$. Hence, for any two $x, y \in B$, we have
  \begin{multline*}
    |(x+B) \cap (y+B)| \ge |(x + B) \cap (y+B) \cap (\varphi^{-1}(1) \setminus A_1)| \\
    \ge 2(|B| - \eps 3^{n-1}) - (3^{n-1} - a_1) \ge |B| - 3\eps 3^{n-1},
  \end{multline*}
  as $|B| + a_1 = a_1 + a_2 - |C|  \ge a - a_0 - \eps 3^{n-1} > (1-\eps) 3^{n-1}$ since $a_0 < t$. This means that every element of $B-B$ has at least $|B| - 3\eps 3^{n-1}$ representations as a difference of two elements of~$B$ and hence
  \begin{equation}
    \label{eq:Z3-B-B-upper}
    |B-B| \le \frac{|B|^2}{|B| - 3\eps3^{n-1}} < 3^{n-1},
  \end{equation}
  where the last inequality follows as $|B|$ satisfies
  \[
  \left(\frac{1}{2} - \eps\right) \cdot 3^{n-1} < \frac{a-a_0}{2} - \eps 3^{n-1} \le a_2 - |C| = |B| \le a_2 < \frac{2}{3} \cdot 3^{n-1}
  \]
  and $\frac{\beta^2}{\beta - 3\eps} < 1$ for all $\beta \in (\frac{1}{2}-\eps, \frac{2}{3})$. As $|\Stab(B-B)| \le |B-B| < 3^{n-1}$, then necessarily $|\Stab(B-B)| \le 3^{n-2}$. It now follows from Kneser's theorem that
  \begin{equation}
    \label{eq:Z3-B-B-lower}
    |B-B| \ge 2|B| - |\Stab(B-B)| \ge 2|B| - 3^{n-2}.
  \end{equation}
  But now~\eqref{eq:Z3-B-B-upper} and~\eqref{eq:Z3-B-B-lower} yield
  \[
  2|B| - 3^{n-2} \le \frac{|B|^2}{|B|-3\eps3^{n-1}},
  \]
  which is impossible as $|B| \ge (\frac{1}{2}-\eps)3^{n-1}$ and $2\beta - \frac{1}{3} > \frac{\beta^2}{\beta - 3\eps}$ if $\beta > \frac{1}{2}-\eps$.
\end{proof}

\begin{proof}[{Proof of Proposition~\textup{\ref{prop:Z3Zp}}}.]
  Fix an $a \in \{p+1, \ldots, 3p\}$, let $b = 3\lceil \frac{a}{3} \rceil$, and note that $b \ge a$ and $b-p \le 3(a-p)$. It suffices to show that there is a set $B \subseteq \Z_3 \times \Z_p$ with $b$ elements satisfying $\ST(B) \le \frac{21}{9}(b-p)^2$. One such set is $B = \Z_3 \times \{x_1, \ldots, x_{b/3}\}$, where $x_1, \ldots, x_p \in \Z_p$  are as in the statement of Theorem~\ref{thm:Zp}. Clearly,
  \[
  \ST(B) = \ST(\Z_3) \cdot \ST(\{x_1, \ldots, x_{b/3}\}) = 9 \cdot \left\lfloor \frac{b-p}{2} \right\rfloor \left\lceil \frac{b-p}{2} \right\rceil \le \frac{9}{4} (b-p)^2.\qedhere
  \]
\end{proof}

\section{A removal-type lemma of Green and Ruzsa}

\label{sec:removal-type-lemma}

In this section, we prove Proposition~\ref{prop:Green-Ruzsa-optimal}

\begin{proof}[{Proof of Proposition~\textup{\ref{prop:Green-Ruzsa-optimal}}}.]
  Suppose that $\eps > 0$ and $G$ is an abelian group of order $n$ and let $A$ be an arbitrary set of at least $(1/3+\eps)$ elements of $G$ with $\ST(A) \le \eps^2n^2/2$. Similarly as in the proof of Theorem~\ref{thm:type-I}, define
  \[
  C = \{x \in A \colon |(x-A) \cap A| \ge \eps n\}.
  \]
  As clearly $\ST(A) \ge |C| \cdot \eps n$, we have $|C| \le \eps n / 2$. Let $A' = A \setminus C$. We claim that for every $x,y \in A'$, there are at least $\eps n$ representations of $x-y$ as $a-b$ with $a,b \in A$. Indeed, for each $x,y \in A'$,
  \begin{multline*}
    |(x-A) \cap (y-A)| \ge |(x-A) \cap (y-A) \cap A^c| \\
    \ge 2(|A| - \eps n) - |A^c| = 3|A| - (1+2\eps)n \ge \eps n.
  \end{multline*}
  In particular, $\ST(A) \ge |(A'-A') \cap A| \cdot \eps n$ and therefore
  \[
  |(A' - A') \cap A'| \le |(A' - A') \cap A| \le \eps n /2.
  \]
  The set $B = A' \setminus (A' - A')$ is sum-free and
  \[
  |A \setminus B| = |C| + |(A'-A') \cap A'| \le \eps n.\qedhere
  \]
\end{proof}

\section{Concluding remarks}

In this paper, we have determined the minimum number of Schur triples in a set of $a$ elements of a finite abelian group $G$ for various $a$ and $G$. We have been able to resolve this problem completely in the cases when $G$ is a cyclic group of prime order and when $G = \Z_2^n$. We have also obtained some partial results for groups of type~\tI, that is, groups whose order is divisible by a prime $p$ satisfying $p \equiv 2 \pmod 3$. In this case, we have determined the minimum number of Schur triples for all $a$ in a short interval starting from $(\frac{1}{3} + \frac{1}{3p})|G|$, which is the largest size of a sum-free set in $G$.

We believe that solving Problem~\ref{prob:main} completely would be rather difficult. There are several reasons for it. First, determining merely the largest size of a sum-free set in a general group of type~\tIII\ requires considerable effort, see~\cite{GrRu05}. Second, the `behaviour' of the function $f_G$ defined in~\eqref{eq:fG} already becomes highly `non-uniform' when $G$ ranges over groups of type~\tII. Third, even the seemingly modest task of determining $f_G$ for groups of even order, say, would most likely entail understanding $f_G$ for general $G$; simply consider the group $\Z_2 \times G$ for some `difficult' $G$.

In view of this, it could be interesting to resolve Problem~\ref{prob:main} for particular families of $G$. One natural candidate would be the cyclic groups $\Z_{2^n}$. Here, we are tempted to guess that, similarly to the cases $G = \Z_p$ and $G = \Z_2^n$, there exists an ordering of the elements of $\Z_{2^n}$ as $x_1^n, \ldots, x_{2^n}^n$ such that for every $a$, the set $\{x_1^n, \ldots, x_a^n\}$ minimises $\ST(A)$ among all $a$-element $A \subseteq \Z_{2^n}$. It is likely that one such family of sequences $(x_i^n)$ is the one defined as follows: $x_1^0 = 0$ and $x_i^{n+1} = 2x_i^n + 1$ and $x_{2^n+i}^{n+1} = 2x_i^n$ for all $n\ge 0$ and $i \in [2^n]$.

\medskip
\noindent
\textbf{Acknowledgement.} Parts of this work were carried out when the first author visited the Institute for Mathematical Research (FIM) of ETH Z\"urich, and also when the second author visited the School of Mathematical Sciences of Tel Aviv University. We would like to thank both institutions for their hospitality and for creating a stimulating research environment. We are indebted to B\'ela Bajnok for pointing out an error in the statement of Theorem~\ref{thm:Zp} in the previous version of this paper.

\bibliographystyle{amsplain}
\bibliography{additive-triples}

\end{document}